% Template for ICASSP-2015 paper; to be used with:
%          spconf.sty  - ICASSP/ICIP LaTeX style file, and
%          IEEEbib.bst - IEEE bibliography style file.
% --------------------------------------------------------------------------
\documentclass{article}
\usepackage{empheq}

\usepackage{spconf,amsmath,graphicx,color,lipsum,algorithm,algpseudocode,amsfonts,amscd,amsthm} 

\usepackage[usenames,dvipsnames,svgnames,table]{xcolor}

\newtheorem{definition}{Definition}
\newtheorem{theorem}{Theorem}[section]

%\numberwithin{equation}{section}
%\newcommand{\argmin}{\operatornamewithlimits{argmin}}
%\newcommand{\argmax}{\operatornamewithlimi
\newcommand{\e}{\mathrm{e}}

\definecolor{orange}{rgb}{1,0.5,0}

\graphicspath{{figures/}}
% Example definitions.
% --------------------
\newenvironment{shrinkeq}[1]
{ \bgroup
  \addtolength\abovedisplayshortskip{#1}
  \addtolength\abovedisplayskip{#1}
  \addtolength\belowdisplayshortskip{#1}
  \addtolength\belowdisplayskip{#1}}
{\egroup}
%----------------------
\algdef{SE}[DOWHILE]{Do}{doWhile}{\algorithmicdo}[1]{\algorithmicwhile\ #1}

 % opposite of stackrel
% Title.
% ------
\title{Finite Synchrosqueezing Transform Based On The STFT}
%
% Single address.
% ---------------
\name{Mozhgan Mohammadpour$^a$, W. Bastiaan Kleijn$^b$, and Rajab Ali Kamyabi Gol$^a$}
\address{$^a$Department of Mathematics, Ferdowsi University of Mashhad, Iran.\\
$^b$School of Engineering and Computer Science, Victoria University of\\
Wellington, New Zealand}
%
% For example:
% ------------
%\address{School\\
%	Department\\
%	Address}
%
% Two addresses (uncomment and modify for two-address case).
% ----------------------------------------------------------
% \twoauthors
%{Mozhgan Mohammadpour}
%	{Department of Mathematics,\\
%	Ferdowsi University of Mashhad, Iran.}
 %{W. Bastiaan Kleijn}
%{School of Engineering and Computer Science,\\
 %Victoria University of Wellington, New Zealand}
% {Rajab Ali Kamyabi Gol}
%	{Department of Mathematics,\\
%	Ferdowsi University of Mashhad, Iran.}

\begin{document}
\ninept

\maketitle
\begin{abstract} 
The Finite STST Synchrosqueezing transform is a time-frequency analysis method 
that can decompose finite complex signals into time-varying oscillatory
components. This representation is sparse and invertible, allowing recovery of
the original signal. The STFT Synchrosqueezing transform on finite dimensional
signals has the advantage of an efficient matrix representation. This article
defines the finite STFT Synchrosqueezing  transform and describes some
properties of this transform. We compare the finite STFT and the finite STFT
Synchrosqueezing transform by applying these transforms to a set of signals. 
\end{abstract} 

\begin{keywords}
Finite STFT, Instantaneous frequency, Finite STFT Synchrosqueezing transform, Oscillatory signals. 
\end{keywords}

\section{Introduction}

Time-frequency decompositions \cite{Cohen} present time-frequency information
about local frequency variations. Various time-frequency representations can be
used to analyze a signal. The Short-Time Fourier transform (STFT), e.g.,
\cite{Grochenig} is applied to extract time-frequency information of a signal
within the time-frequency plane. The continuous wavelet transform (CWT) 
\cite{Daubechies} provides a time-frequency representation of a signal with
a time and frequency localization that is appropriate for many natural
processes. The S-transform proposed by Stockwell et  al. \cite{Stockwell} is a
time-frequency analysis technique that combines elements of the CWT and the
STFT, and has been widely applied on seismic data processing and analysis of
behavior of the signal \cite{Gao}. All fore-mentioned transforms provide blurry
information about the signal \cite{Daubechies, Flandrin}. In contrast, the
Synchrosqueezing transform is a time frequency analysis that provides more
accurate information about local frequency variations.

The Synchrosqueezing transform \cite{Thakur, Thakur1, Thakur2, Wu} is a time
frequency analysis that aims to characterize time-varying oscillatory signals
efficiently. The transform is designed to analyze signals of
the form: 
\begin{equation}\label{eq0}
x\left(t\right)=\sum_{k'=1}^K A_{k'}\left(t\right)e^{2\pi i\phi_{k'}\left(t\right)},
\end{equation}
where $A_{k'}\left(t\right)$ and $\phi_{k'}\left(t\right)$ are an amplitude
function and a phase function, respectively. 
The signal $x$ consist of different layers of oscillation, each with
instantaneous time-varying information. The Synchrosqueezing transform is a
powerful tool for the 
analysis of a signal based on the notion of \textit{instantaneous
frequency}. Instantaneous frequency is a natural extension of the usual Fourier
frequency that describes how quickly a signal oscillates locally at a given point
in time or, more generally, how a quickly a number of components of the signal
oscillate locally at a given point in time. 

The Synchrosqueezing transform has a wide range of applications including 
geophysics \cite{Herrera}, global climate \cite{Thakur1},
economics\cite{Guharay} and medicine \cite{Thakur2, Wu1, Wu2}. These
applications lead in a natural manner to the consideration of the
Synchrosqueezing  transform for a finite dimensional Hilbert space. Thakur and
Wu in 
\cite{Thakur1} use a Gabor based signal analysis method that they call the STFT
Synchrosqueezing transform. In fact, they analyze continuous domain signals
$f\in \mathcal{L}^2\left(\mathbb{C}\right)$ that consist of a superposition of a
finite number of oscillatory components. Then they propose an algorithm to
recover these 
instantaneous frequencies from a finite set of uniformly or non-uniformly spaced
samples. They also prove that similar results hold for real valued signals where
$e^{2\pi i\phi_{k'}\left(t\right)}$ is replaced by $\cos\left(2\pi
  \phi_{k'}\left(t\right)\right)$ in \eqref{eq0}. This motivates us 
to focus on the 
oscillatory decomposion of finite-dimensional signals and define the STFT
Synchrosqueezing transform for them. 

The STFT Synchrosqueezing transform on finite groups can be studied in the realm
of numerical linear algebra. In the other words, it is defined based on the STFT
on finite groups. However, the STFT Synchrosqueezing transform is based on the
continuous STFT transform. The finite STFT has a matrix representation which is
more convenient to work rather than the continuous STFT. It provides
matrix factorization and for this reason finite short time Fourier analysis is more
popular in engineering rather than the STFT.  Moreover, the 
STFT Synchrosqueezing transform on a finite cyclic group can be studied
numerically in order to better understand properties of STFT Synchrosqueezing
transform on the real line. Similarly to the STFT transform, the relationships
between the STFT Synchrosqueezing transform on the real line, on the integers,
and on cyclic groups can be studied based on a sampling and periodization
argument \cite{Thakur1, Wu}. On the other hand, the delta function, which is
used in the STFT Synchrosqueezing transform, can not be defined on 
$\mathcal{L}^2\left(\mathbb{C}\right)$ and an approximation delta is used
instead of delta function. However, it is defined on $\mathbb{C}^N$. 

In this paper, we consider that the continuous signal is sampled by $N$
regularly spaced samples. In other words, we assume $\phi_{k'}$ and $A_{k'}$ are
naturally continuous functions but we are given only $\phi_{k'}\left(n\right)$
and $A_{k'}\left(n\right)$ for $n=0,\cdots,N-1$. As a result, we work with a
periodic discrete function $x:\mathbb{Z}\rightarrow \mathbb{C}$ where
$x\left(k+N\right)=x\left(k\right)$ for $k\in \mathbb{Z}$. It is of the form:
\begin{equation}\label{eq1}
x\left(n\right)= \sum_{k'=1}^K A_{k'}\left(n\right) \e^{2\pi i \frac{\phi_{k'}\left(n\right)}{N}}, \quad n=0,1,\cdots, N-1.
\end{equation}
This paper structured as follows. We start with a brief introduction of the
STFT transform for finite dimensional signals and some of its properties in
section II. In section III, we discuss the instantaneous frequency information
and the finite Synchrosqueezing transform for finite, purely harmonic
signals. Then, we demonstrate how to use the definition of
instantaneous frequency information of pure harmonic signals to approximate the
instantaneous frequency of an oscillatory signal. Then we define the finite
Synchrosqueezing transform and show how we can extract the oscillatory
components. In section IV we show that the results given in section
III also are true for real signals where the exponentials $\e^{2\pi i
\frac{\phi_{k'}\left(n\right)}{N}}$ are replaced by $\cos\left(2\pi 
\frac{\phi_{k'}\left(n\right)}{N}\right)$ in \eqref{eq1}. In Section V, we
show that the finite Synchrosqueezing transform has the stability
property. Section VI is devoted to some comparative numerical results. 

\section{\bf{Preliminaries and Notations}} 

To define the STFT-Synchrosqueezing transform for finite dimensional signals, we 
first recall the short time Fourier transform for finite dimensional
signals. We index the components of a vector $x\in \mathbb{C}^N$ by 
$\{0, 1, \cdots, N-1\}$, i.e., the cyclic group
$\mathbb{Z}_{N}$. We will write $x\left( k \right)$ 
instead of $\mathbf{x}_k$ to avoid algebraic operations on
indices. 

The discrete Fourier transform is basic in short time Fourier analysis and is defined as
\[ 
\mathcal{F} x \left( m \right) = \hat{x}\left( m \right) = \sum_{n=0}^{N-1} x\left( n\right) e^{-2\pi i m\frac{n}{N}}. 
\] 

The most important properties of the Fourier transform are the Fourier
inversion formula and the Plancherel formula
\cite{Bastiaans,Qiu1,Qiu2,Pfander}. 
The inversion formula shows that
any $x$ can be written as a linear combination of harmonics.
This means the normalized harmonics $\{\frac{1}{\sqrt{N}} e^{2\pi i m
\frac{\left(. \right)}{N}}\}_{m=0}^{N-1}$ form an orthonormal basis
of $\mathbb{C}^N$ and hence we have 
\[ 
x= \frac{1}{N} \sum_{m=0}^{N-1} \hat{x} \left( m \right) e^{2\pi i m \frac{\left(. \right)}{N}} \quad x \in \mathbb{C}^N. 
\] 
Moreover, the Plancherel formula states
\[ 
\langle x, y \rangle = \frac{1}{N} \langle \hat{x}, \hat{y} \rangle \quad x, y \in \mathbb{C}^N,
\] 
which results in 
\[ 
\sum_{n=0}^{N-1} \vert x \left( n \right) \vert ^2 = \frac{1}{N} \sum_{m=0}^{N-1} \vert \hat{x} \left( m \right) \vert ^2,
\] 
where $\vert x\left(n \right) \vert ^2$ quantifies the energy
of the signal $x$ at time $n$, and where the right-hand side 
%the Fourier coefficients $\hat{\mathbf{x}}\left(m \right)$ 
indicates that the harmonic $e^{2\pi
i m\frac{\left(.\right)}{N}}$ contributes energy $\frac{1}{N}\vert
\hat{x}\left(m \right) \vert^2$ to $x$.

Short time Fourier analysis is the interplay of the Fourier transform, translation operators, and modulation operators. The cyclic translation operator $T_k: \mathbb{C}^N \rightarrow \mathbb{C}^N$ is given by

\begin{align*}
T_k&\mathbf{x}= T_k \left( x\left(0 \right), \cdots, x \left( N-1 \right) \right)^t\\
&=\left(x\left( N-k\right) , x\left( N-k+1\right),x \left( 0 \right) , \cdots, x \left( N-k-1 \right) \right)^t.
\end{align*}
It is seen that the operator $T_k$ alters the position of the entries of
$x$ and that, equivalently, $n-k$ is achieved modulo $N$. 
%by module of $N$.

Similarly, the modulation operator $M_l:\mathbf{C}^N\rightarrow \mathbb{C}^N$ is
given by 
\begin{align*}
M_l&x= \left( e^{-2\pi i l\frac{0}{N}}x\left( 0 \right), \right.\\
&\left. e^{-2\pi i l\frac{1}{N}} x\left(1\right), \cdots , e^{-2\pi i l\frac{N-1}{N}} x\left( N-1 \right) \right)^t.
\end{align*}
The modulation operators are implemented as the pointwise product of the
vector with harmonics $e^{-2\pi i l\frac{.}{N}}$. 

The translation and modulation operators are time-shift and frequency-shift
operators, respectively. The time-frequency shift operator $\pi \left( k,l \right)$
is the combination of translation operators and modulation operators:
\[ 
\pi \left(k,l \right): \mathbb{C}^N\rightarrow \mathbb{C}^N \quad \pi \left( k,l\right) x= M_lT_k x.
\] 
Hence, the STFT %short-time Fourier transform 
$V_{g}: \mathbb{C}^N\rightarrow \mathbb{C}^{N\times N}$ with respect to the window
$g \in \mathbb{C}^N$ for every $x \in \mathbb{C}^N$ can be written as
\[ 
V_{\phi}x \left( k,l \right) = \langle x, \pi \left( k,l \right) \phi \rangle= \sum_{n=0}^{N-1} x\left(n \right) \overline{g\left(n-k\right)} e^{-2\pi i l\frac{n}{N}}, 
\] 
where $\overline{g}$ is the conjugate of $g$. The STFT %short time-Fourier transform 
uses a window function $g$, supported at a neighborhood of zero that is translated by
$k$. Hence, the pointwise product with $\mathbf{x}$ selects a portion
of $\mathbf{x}$ centered in time at $k$, and this portion is analyzed using a
Fourier transform.

\section{\bf{Finite Synchrosqueezing Transform of Harmonic Signal}}
\label{s:STFTS} 
In this section, we show how we can estimate the instantaneous frequency
information based on short time Fourier transform.  We first motivate the idea of  
finding instantaneous freuency information by considering the instantaneous
frequency of a purely harmonic signal,  
\begin{equation}
x\left(n\right)=Ae^{\frac{2\pi i\omega n}{N}}.
\label{q:a_simple_x}
\end{equation}
That is, we first define the instantaneous frequency information for a purely harmonic signals. Then we extend this
definition to a combination of elementary oscillations. To introduce
an instantaneous frequency information formula, we define the modified STFT:
\begin{align*}
V_{g}&x \left( n,l \right) = \langle x, T_nM_l g \rangle= \sum_{k=0}^{N-1} x\left(k \right) \overline{g\left(k-n\right)} e^{-2\pi i l\frac{k-n}{N}}\\
&=e^{2\pi i l\frac{n}{N}}\sum_{k=0}^{N-1} x\left(k \right) \overline{g\left(k-n\right)} e^{-2\pi i l\frac{k}{N}}.
\end{align*}

Consider a smooth window function $g$ is given. By Plancherel's theorem, we can write $V_{g}x$,
the short time Fourier transform of $x$ of \eqref{q:a_simple_x} with respect to $g$ as 
\begin{align*}
V_{g}x\left(n,l\right) &=\langle x, T_nM_lg\rangle =\frac{1}{N} \langle \hat{x}, M_{-n}T_l\hat{g}\rangle\\
&=\frac{1}{N} \sum_{\xi=0}^{N-1} \hat{x}\left( \xi\right) \overline{T_l\hat{g}\left(\xi \right)} e^{\frac{2\pi i n \xi}{N}}\\
&= \frac{A}{N}\sum_{\xi=0}^{N-1}\delta \left( \xi - \omega \right)\overline{\hat{g}\left( \xi - l \right)}e^{\frac{2\pi i n \xi}{N}}\\
&= \frac{A}{N} \hat{g}\left(\omega-l\right)e^{\frac{2\pi i n \omega}{N}} .
\end{align*}
Motivated by the above analysis, we now define for a signal $x$ of the form 
\eqref{q:a_simple_x}, for any $\left(n,l\right)$ where
$V_{g}x\left(n,l\right)\neq0$, the instantaneous frequency information
$\omega_x\left(n,l\right)$ as 
\begin{equation}
\omega_x\left(n,l\right)=\mathrm{round}\left(\frac{1}{\frac{2\pi i}{N}}\mathrm{ln}\left(\frac{V_{g}x\left(n+1,l\right)}{V_{g}x\left(n,l\right)}\right)\right).
\label{q:instfreq}
\end{equation}
It should be noted that the instantaneous frequency information must be an
integer for the defininition of the STFT Synchrosqueezing transform. However,
$\omega$ is not necessary integer. This is the reason why we need the rounding
function to find the nearest integer number to the instantaneous frequency. The
rounding function used in \eqref{q:instfreq} is given by 
\[
\mathrm{round}\left(x\right)=\left\{ 
\begin{array}{ll}
\left[x\right] & x\leq \left[x\right]+0.5\\
\left[x+1\right]& x> \left[x\right]+0.5
\end{array}
\right.
\]
where $\left[x\right]$ is the integer part of $x$. 

Note that the instantantaneous frequency information, in general, does not need
to be equal 
to the instantaneous frequency of a given signal. In the context of the STFT
Synchrosqueezing transform this implies that $\omega_x(n,l) $, in general,
differs from the frequency $l$. In fact, as will be shown below, the transform
``squeezes''  energy to the frequency $l$ that equals the instaneous
frequency information $\omega_x(n,l) $.

We now are ready to define the STFT Synchrosqueezing based on the phase function for $x$ of the form \eqref{q:a_simple_x}:
\[
S_{\phi}x\left( n, \xi\right) =\sum_{l=0}^{N-1}V_{\phi}x\left(n,l\right)\delta \left(\xi - \omega_x\left(n,l\right)\right).
\]
Thus, the Synchrosqueezing transform squeezes the energy in the frequency
direction to the instantaneous frequency information of the purely harmonic
signal. 

Next, based on the definition of instantaneous frequency information of
purely harmonic signals and the STFT Synchrosqueezing transform, we 
generalize these notions for oscillatory signals with multiple components. To this 
purpose, we define a function class of functions with well-separated oscillatory
components: 
\begin{definition}\label{def1}
The space $\mathcal{A}_d\subset \mathbb{C}^N$ of superpositions consist of functions $x$ having the form 
\[
x\left(n\right)= \sum_{k'=1}^K x_{k'}\left(n\right),
\]
for some $K> 0$ and $x_{k'}\left(n\right)=A_{k'}\left(n\right)e^{\frac{2\pi \phi_{k'}\left(n\right)}{N}}$ and $\phi_{k'}$ satisfy
\[
\phi'_{k'}\left(n\right)-\phi'_{k'+1}\left(n\right)>d\quad \textrm{for every\,\,} n\in \mathbb{Z}_N,
\]
where $d\leq N-1$
\end{definition}
Functions in $\mathcal{A}_d$ are composed of a summation of oscillatory components and
the instantaneous frequencies of any two consecutive components are separated by
at least $d$. For a given window function $g\in \mathbb{C}^N$, we can apply the
modified short time Fourier transform to any $x\in\mathcal{A}_d $. Similarly, we
can define the instantaneous frequency information as: 
\begin{definition}[Instantaneous frequency information function] 
Let $x\in \mathcal{A}_d$. Select a window function $\phi\in \mathbb{C}^N$ such
that $\vert\mathrm{supp}\left(\hat{g}\right)\vert<\frac{d}{2}$. The
instantaneous frequency information $\omega_x\left(k,l\right)$ is defined by 
\begin{equation}\label{eq2}
\omega_x\left(n,l\right)=\left\{
\begin{array}{ll}
\mathrm{round}\left(\mathrm{real}\left(\frac{1}{\frac{2\pi i }{N}}\mathrm{ln}\left(\frac{V_{g}x\left(n+1,l\right)}{V_{g}x\left(n,l\right)}\right)\right)\right) & V_{g}x\left(n,l\right)\neq0\\
0& V_{g}x\left(n,l\right)=0
\end{array}
\right.
\end{equation}
\end{definition}

We define the finite Synchrosqueezing transform based on the instantaneous frequency information as follows:
\begin{definition}
Let $x \in \mathcal{A}_d$. Select a window function $g\in \mathbb{C}^N$ such that $\vert\mathrm{supp}\left(\hat{g}\right)\vert<\frac{d}{2}$. The STFT Synchrosqueezing transform is defined by
\[
S_{g}x\left( n, \xi\right) =\sum_{l=0}^{N-1}V_{g}x\left(n,l\right)\delta\left(\xi-\omega_x\left(n,l\right)\right).
\]
\end{definition}
The following theorem states that the error between instantaneous
frequency $\phi'_{k_0}$ and instantaneous frequency information 
$\omega_x\left(n,l\right)$ is less than a given $\epsilon$. Before we
provide the Theorem, we define some notations:
\[
I_s^1=\sum_{k=0}^{N-1} \vert g\left(k-n \right) \vert \vert k-n\vert^s
\]
and
\[
I_s^2=\sum_{k=0}^{N-1} \vert g\left(k-n-1\right) \vert \vert k-n\vert^s.
\]
Moreover, we denote $Z_{k'}=\{\left(n,l\right): \vert
l-\phi_{k'}\left(n\right)\vert<\frac{d}{2}\}$ for each $k'\in\{1,
\cdots,K\}$.

We can now state the main theorem with related to the finite STFT Synchrosqueezing transform:
\begin{theorem}\label{th2}
Consider a signal $x\in \mathcal{A}_d$ is given. Select a window function $g\in \mathbb{C}^N$ such that $\vert\mathrm{supp}\left(\hat{g}\right)\vert<\frac{d}{2}$. Moreover consider $\Vert\phi''_{k'}\Vert_{\infty}\leq \epsilon\Vert\phi'_{k'}\Vert_{\infty}$ and $\Vert A'_{k'}\Vert_{\infty}\leq \epsilon\Vert\phi'_{k'}\Vert_{\infty}$. Also, let $\delta=\min\{V_gx\left(n,l\right); 0\leq n,l \leq N-1, V_gx\left(n,l\right)\neq 0\}$ and $M=\max\{1,\frac{V_gx\left(n,l\right)}{V_gx\left(n+1,l\right)}\}$. Then, we have the following
\begin{equation}\label{ff2}
\begin{split}
&\vert \omega_x\left(n,l\right)-\phi'_{k_0}\left(n\right)\vert \\
&\leq \frac{NM\epsilon}{2\pi \delta}\left[ \sum_{k'=1}^K \Vert \phi'_{k'}\Vert_{\infty} \left( I_1^1+\pi \vert A_{k'}\left(n\right)\vert I_2^1+ I_1^2+\pi \vert A_{k'}\left(n\right)\vert I_2^2\right) \right]\\
&\quad +0.5=\tilde{\epsilon}
\end{split}
\end{equation}
where $\left(n,l\right)\in Z_{k_0}$.

Furthermore, if $\left(n,l\right)\notin Z_{k'}$ for every $k'=1,\cdots,K$, then
\begin{equation}
\label{eqt1}
\vert V_gx\left(n,l\right)\vert\leq \sum_{k'=0}^{K} \epsilon \Vert\phi '_{k'}\Vert_{\infty} \left( I_1^1+\pi \vert A_{k'}\left(n\right) \vert I_2^1\right).
\end{equation}
and
\begin{equation}\label{eqt2}
\vert V_gx\left(n+1,l\right)\vert\leq \sum_{k'=0}^{K} \epsilon \Vert\phi '_{k'}\Vert_{\infty} \left( I_1^2+\pi \vert A_{k'}\left(n\right) \vert I_2^2\right).
\end{equation}
\end{theorem}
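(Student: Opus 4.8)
\textit{Proof outline.}
The plan is to treat each component $x_{k'}$ of $x\in\mathcal{A}_d$ as a first‑order perturbation of a pure harmonic and then track how these perturbations propagate through the ratio and the complex logarithm in \eqref{eq2}. Fix $(n,l)$. For each $k'$ I would Taylor expand about the time $n$, writing $\phi_{k'}(k)=\phi_{k'}(n)+\phi'_{k'}(n)(k-n)+\tfrac12\phi''_{k'}(c)(k-n)^2$ and $A_{k'}(k)=A_{k'}(n)+A'_{k'}(c')(k-n)$, and introduce the harmonic $\tilde{x}_{k'}(k)=A_{k'}(n)\,\e^{2\pi i\phi_{k'}(n)/N}\,\e^{2\pi i\phi'_{k'}(n)(k-n)/N}$, which agrees with $x_{k'}$ to first order at $n$. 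Using $\vert\e^{it}-1\vert\le\vert t\vert$ together with the hypotheses $\Vert\phi''_{k'}\Vert_{\infty},\Vert A'_{k'}\Vert_{\infty}\le\epsilon\Vert\phi'_{k'}\Vert_{\infty}$, this gives a pointwise estimate $\vert x_{k'}(k)-\tilde{x}_{k'}(k)\vert\le\epsilon\Vert\phi'_{k'}\Vert_{\infty}\,(\,\vert k-n\vert+c\,\vert A_{k'}(n)\vert\,\vert k-n\vert^{2}\,)$ for an explicit constant $c$. Multiplying by $\vert g(k-n)\vert$, resp.\ by $\vert g(k-n-1)\vert$, and summing over $k$ reproduces, up to the stated constants, exactly the quantities $I_{1}^{1}+\pi\vert A_{k'}(n)\vert I_{2}^{1}$ and $I_{1}^{2}+\pi\vert A_{k'}(n)\vert I_{2}^{2}$. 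Hence, setting $R:=\sum_{k'}\epsilon\Vert\phi'_{k'}\Vert_{\infty}(I_{1}^{1}+\pi\vert A_{k'}(n)\vert I_{2}^{1})$ and $R':=\sum_{k'}\epsilon\Vert\phi'_{k'}\Vert_{\infty}(I_{1}^{2}+\pi\vert A_{k'}(n)\vert I_{2}^{2})$, we obtain $\vert V_{g}x(n,l)-\sum_{k'}V_{g}\tilde{x}_{k'}(n,l)\vert\le R$ and $\vert V_{g}x(n+1,l)-\sum_{k'}V_{g}\tilde{x}_{k'}(n+1,l)\vert\le R'$.

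Next I would use the frequency separation. Since $\vert\mathrm{supp}(\hat{g})\vert<d/2$ and the instantaneous frequencies are $d$‑separated, the sets $Z_{k'}$ are pairwise disjoint, and by the pure‑harmonic computation of Section~\ref{s:STFTS} the ideal term $V_{g}\tilde{x}_{k'}(n,l)=\tfrac1N A_{k'}(n)\,\e^{\cdots}\,\hat{g}(\phi'_{k'}(n)-l)$ vanishes whenever $(n,l)\notin Z_{k'}$. This settles the second half of the theorem at once: if $(n,l)\notin Z_{k'}$ for every $k'$, then every ideal term vanishes, so $V_{g}x(n,l)$ equals the residual alone and is bounded by $R$, which is \eqref{eqt1}; running the same estimate at the shifted time index $n+1$ (window $g(\cdot-n-1)$, still expanding about $n$) gives \eqref{eqt2}. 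For the first half, if $(n,l)\in Z_{k_{0}}$ then $(n,l)\notin Z_{k'}$ for $k'\neq k_{0}$, so the only surviving ideal term is $V_{g}\tilde{x}_{k_{0}}$; and because the \emph{same} approximation $\tilde{x}_{k_{0}}$, centered at $n$, serves for both time indices, the pure‑harmonic formula gives $V_{g}\tilde{x}_{k_{0}}(n+1,l)=\e^{2\pi i\phi'_{k_{0}}(n)/N}V_{g}\tilde{x}_{k_{0}}(n,l)$ exactly. Writing $V:=V_{g}\tilde{x}_{k_{0}}(n,l)$ we then have $V_{g}x(n,l)=V+\rho_{0}$ and $V_{g}x(n+1,l)=\e^{2\pi i\phi'_{k_{0}}(n)/N}V+\rho_{1}$ with $\vert\rho_{0}\vert\le R$, $\vert\rho_{1}\vert\le R'$, whence
\[
\frac{V_{g}x(n+1,l)}{V_{g}x(n,l)}=\e^{2\pi i\phi'_{k_{0}}(n)/N}+\frac{\rho_{1}-\e^{2\pi i\phi'_{k_{0}}(n)/N}\rho_{0}}{V_{g}x(n,l)}=\e^{2\pi i\phi'_{k_{0}}(n)/N}(1+\eta),
\]
where $\vert\eta\vert\le(R+R')/\vert V_{g}x(n,l)\vert\le(R+R')/\delta$ by the definition of $\delta$.

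The last step pushes this through the logarithm in \eqref{eq2}. With principal logarithms, $\mathrm{ln}(V_{g}x(n+1,l)/V_{g}x(n,l))=\tfrac{2\pi i}{N}\phi'_{k_{0}}(n)+\mathrm{ln}(1+\eta)$, so $\mathrm{real}(\tfrac{N}{2\pi i}\mathrm{ln}(\cdots))=\phi'_{k_{0}}(n)+\tfrac{N}{2\pi}\,\mathrm{imag}\,\mathrm{ln}(1+\eta)$. An elementary bound on the principal argument, $\vert\mathrm{imag}\,\mathrm{ln}(1+\eta)\vert=\vert\arg(1+\eta)\vert\le M\vert\eta\vert$ (valid once $\vert\eta\vert$ is small, with $M$ controlling $1/\vert1+\eta\vert=\vert V_{g}x(n,l)\vert/\vert V_{g}x(n+1,l)\vert$), then places the argument of $\mathrm{round}(\cdot)$ within $\tfrac{NM}{2\pi\delta}(R+R')$ of $\phi'_{k_{0}}(n)$; since $\mathrm{round}$ moves its argument by at most $0.5$, inequality \eqref{ff2} follows with $\tilde{\epsilon}$ as stated.

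I expect the genuinely delicate point to be this last logarithm step: it is legitimate only when $\eta$ is small enough that $1+\eta$ stays in the domain of the principal branch and bounded away from $0$, which is precisely what forces $\epsilon$ to be small (tacit in the statement) and what makes the constants $\delta$, keeping the denominator $V_{g}x(n,l)$ away from $0$, and $M$, controlling $\arg(1+\eta)$, unavoidable. The rest --- the Taylor remainders, the summation against $\vert g\vert$, and the bookkeeping of the two factors of $N$ (one from the modulation, one from $\tfrac{N}{2\pi i}$) so that they land as in \eqref{ff2} --- is routine.
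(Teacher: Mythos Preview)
Your proposal is correct and follows essentially the same route as the paper: linearize each component about $n$ to produce the ideal harmonic term $A_{k'}(n)\e^{2\pi i\phi_{k'}(n)/N}\hat g(l-\phi'_{k'}(n))$, bound the residuals $R,R'$ via the Taylor remainders and the hypotheses on $\phi''_{k'},A'_{k'}$, use $\vert\mathrm{supp}(\hat g)\vert<d/2$ to kill all ideal terms except the $k_0$ one, and then push the perturbation through the logarithm and add $0.5$ for rounding. The only cosmetic difference is that the paper phrases the logarithm step as a mean-value bound $\vert\ln a-\ln b\vert\le M\vert a-b\vert$ applied directly to the ratio and $\e^{2\pi i\phi'_{k_0}(n)/N}$, whereas you factor the ratio as $\e^{2\pi i\phi'_{k_0}(n)/N}(1+\eta)$ and bound $\arg(1+\eta)$; these are the same estimate, and your version is arguably cleaner since the complex mean-value theorem needs care.
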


Theorem \ref{th2} basically tells us how to recover instantaneous frequencies
for multi-component oscillatory signals. It shows that the error of
instantaneous frequencies obtained from the formula \eqref{eq2} is less than 
$\tilde{\epsilon}$. Next we prove the theorem.
\begin{proof}
To prove the inequality \eqref{ff2} we have
%\begin{shrinkeq}{-0.4ex} 
\small{
\begin{align}
&\vert \omega_x\left(n,l\right)-{\phi'}_{k_0}\left(n\right)\vert\nonumber \\
&=\left\vert \mathrm{round}\left(\mathrm{real}\left(\frac{N}{2\pi i}\mathrm{ln}\left(\frac{V_{g}x\left(n+1,l\right)}{V_{g}x\left(n,l\right)}\right)\right)\right) -\phi '_{k_0}\left(n\right)\right\vert\nonumber\\
&\leq\left\vert\frac{N}{2\pi i}\mathrm{ln}\left(\frac{V_{g}x\left(n+1,l\right)}{V_{g}x\left(n,l\right)}\right) -\phi '_{k_0}\left(n\right)\right\vert+0.5\nonumber\\
&=\frac{N}{2\pi}\left\vert \mathrm{ln}\left(\frac{V_{g}x\left(n+1,l\right)}{V_{g}x\left(n,l\right)}\right)-\mathrm{ln}e^{\frac{2\pi i\phi '_{k_0}\left(n\right)}{N}}\right\vert+0.5\nonumber\\
&\leq \frac{N}{2\pi }M\left\vert \frac{V_{g}x\left(n+1,l\right)}{V_{g}x\left(n,l\right)}-e^{\frac{2\pi i\phi '_{k_0}\left(n\right)}{N}} \right\vert+0.5 \label{eq3}\\
&\leq \frac{N}{2\pi}M\nonumber\\
&\left[\left\vert \frac{V_gx\left(n+1,l\right)-e^{\frac{2\pi i\phi'_{k_0}\left(n\right)}{N}}A_{k_0}\left(n\right)e^{\frac{2\pi i \phi_{k_0}\left(n\right)}{N}}\hat{g}\left(l-\phi'_{k_0}\left(n\right)\right)}{V_gx\left(n,l\right)} \right\vert \right.\nonumber\\
&\left. +\left\vert e^{\frac{2\pi i\phi'_{k_0}\left(n\right)}{N}}\frac{A_{k_0}\left(n\right)e^{\frac{2\pi i\phi_{k_0}\left(n\right)}{N}}\hat{g}\left(l-\phi'_{k_0}\left(n\right)\right)-V_gx\left(n,l\right)}{V_gx\left(n,l\right)}\right\vert \right]\nonumber\\
&+0.5\label{eq33}
\end{align}}
%\end{shrinkeq}

We derived \eqref{eq3} by the mean value theorm. 

Note that 
$A_{k_0}\left(n\right)e^{\frac{2\pi i \phi_{k_0}\left(n\right)}{N}}\hat{g}\left(l-\phi'_{k_0}\left(n\right)\right)=0$ for $\left(n,l\right)\notin Z_{k'}$ for any $k'=1,\cdots, K$, since $\vert\mathrm{supp}\left(\hat{g}\right)\vert<\frac{d}{2}$. Hence we can rewrite the second part of \eqref{eq33} as:
\begin{shrinkeq}{-0.4ex}
\begin{align}
\vert & A_{k_0}\left(n\right)e^{\frac{2\pi i
\phi_{k_0}\left(n\right)}{N}}\hat{g}\left(l-\phi'_{k_0}\left(n\right)\right)-V_gx\left(n,l\right)\vert\nonumber\\
&=\vert\sum_{k'=1}^K \left(A_{k'}\left(n\right)e^{\frac{2\pi i \phi_{k'}\left(n\right)}{N}}\sum_{k=0}^{N-1}g\left(k-n\right)e^{\frac{-2\pi i}{N}\left(l-\phi'_{k'}\left(n\right)\right)\left(k-n\right)} \notag \right.\nonumber\\
&\left. -\sum_{k=0}^{N-1} A_{k'} \left(k\right)e^{\frac{2\pi i\phi_{k'}\left(k\right)}{N}}g\left(k-n\right)e^{\frac{-2\pi il\left(k-n\right)l}{N}}\right)\vert\nonumber\\
&\leq \sum_{k'=1}^K\left(\sum_{k=0}^{N-1}\vert A_{k'}\left(k\right)-A_{k'}\left(n\right)\vert\vert g\left(k-n\right)\vert \notag \right.\nonumber\\
& \left. +\sum_{k=0}^{N-1}\vert A_{k'}\left(n\right)g\left(k-n\right)\vert\vert e^{\frac{2\pi i \phi_{k'}\left(k\right)}{N}}-e^{\frac{2\pi i}{N}\left[\phi_{k'}\left(n\right)
+\phi'_{k'}\left(n\right)\left(k-n\right)\right]}\vert\right)\nonumber\\
& \leq \sum_{k'=1}^K \left(\Vert A_{k'}'\Vert_{\infty} \sum_{k=0}^{N-1} \vert g\left(k-n \right) \vert \vert k-n\vert \notag \right.\nonumber\\
&\left.+ \vert A_{k'}\left(n\right) \vert \sum_{k=0}^{N-1} \vert g\left(k-n\right) \vert \pi \Vert \phi''_{k'} \Vert_{\infty} \vert k-n \vert ^2\right)\label{eq4}\\
&\leq\sum_{k'=0}^{K}\epsilon\Vert\phi '_{k'}\Vert_{\infty}\left( I_1^1+\pi \vert A_{k'}\left(n\right)\vert I_2^1\right).\label{eq4p}
\end{align}
%\end{shrinkeq}
We can derive \eqref{eq4} using a Taylor expansion:
%\begin{shrinkeq}
\begin{align*}
&\vert 1-e^{\frac{2\pi i}{N}\left[-\phi_{k'}\left(k\right)+\phi_{k'}\left(n\right)+\phi'_{k'}
\left(n\right)\left(k-n\right)\right]}\vert\\
& \leq \pi \Vert {\phi''}_{k'}\Vert_{\infty}\vert k-n\vert^2.
\end{align*}
%\end{shrinkeq}
The first part of inequality \eqref{eq33} can be written as:
%\begin{shrinkeq}{-0.5ex}
\begin{align}
\vert & V_gx\left(n+1,l\right)- e^{\frac{2\pi
i}{N}\left(\phi'_{k_0}\left(n\right)\right)}
A_{k_0}\left(n\right)e^{\frac{2\pi i
\phi_{k_0}\left(n\right)}{N}}\hat{g}\left(l-\phi'_{k_0}\left(n\right)\right)\vert.\nonumber\\
&=\vert\sum_{k'=1}^K \left( \sum_{k=0}^{N-1} A_{k'}\left(k\right) e^{\frac{2\pi
i\phi_{k'}\left(k\right)}{N}}g\left(k-n-1\right)e^{\frac{-2\pi
il\left(k-n-1\right)}{N}} \notag\right.\nonumber\\
&\left.-\sum_{k=0}^{N-1}A_{k'}\left(n\right)g\left(k-n-1\right)e^{\frac{-2\pi i}{N}\left(l-\phi'_{k'}\left(n\right)\right)\left(k-n-1\right)} \notag \right.\nonumber\\
&\left. e^{\frac{2\pi i{\phi'}_{k'}\left(n\right)}{N}}e^{\frac{2\pi i\phi_{k'}\left(n\right)}{N}} \right)\vert\nonumber\\
&\leq\sum_{k'=1}^K \sum_{k=0}^{N-1}\vert g\left(k-n-1\right)\vert\nonumber \\
&\vert A_{k'}\left(k\right) e^{\frac{2\pi i \phi_{k'}\left(k\right)}{N}}-A_{k'}\left( n\right) e^{\frac{2\pi i}{N}\left[\phi\left(n\right)+\phi'_{k'}\left(n\right)\left(k-n\right)\right]}\vert\nonumber\\
&\leq\sum_{k'=1}^K\sum_{k=0}^{N-1}\vert g\left(k-n-1\right)\vert \left[ \vert A_{k'}\left(k\right)\notag \right.\nonumber\\
&\left. -A_{k'}\left(n\right)\vert+\vert A_{k'}\left(n\right)\vert \vert e^{\frac{2\pi i \phi_{k'}\left(k\right)}{N}}-e^{\frac{2\pi i}{N}\left[\phi_{k'}\left(n\right)+\phi'_{k'}\left(n\right)\left(k-n\right)\right]}\vert \right]\nonumber\\
&\leq \sum_{k'=1}^K \left( \Vert A_{k'}'\Vert_{\infty} \sum_{k=0}^{N-1} \vert g\left(k-n-1 \right) \vert \vert k-n\vert\notag \right.\nonumber\\
&\left.+ \vert A_{k'}\left(n\right) \vert \sum_{k=0}^{N-1} \vert g\left(k-n-1\right) \vert \pi \Vert \phi''_{k'} \Vert_{\infty} \vert k-n \vert ^2 \right)\nonumber\\
& \leq \sum_{k'=0}^{K} \epsilon \Vert\phi '_{k'}\Vert_{\infty} \left( I_1^2+\pi \vert A_{k'}\left(n\right) \vert I_2^2\right).\label{eq5}
\end{align}
\end{shrinkeq}
By considering the inequalities \eqref{eq33}, \eqref{eq4p} and \eqref{eq5} we have
\begin{shrinkeq}{-0.4ex}
\begin{align*}
&\vert\omega_x\left(n,l\right)-\phi'_{k_0}\left(n\right)\vert \leq \\
&\frac{NM\epsilon}{2\pi \delta}\left[ \sum_{k'=1}^K \Vert \phi'_{k'}\Vert_{\infty} \left( I_1^1+\pi \vert A_{k'}\left(n\right)\vert I_2^1+ I_1^2+\pi \vert A_{k'}\left(n\right)\vert I_2^2\right) \right]+0.5=\tilde{\epsilon}.
\end{align*}
\end{shrinkeq}
Now by the fact that $A_{k_0}\left(n\right)e^{\frac{2\pi i\phi_{k_0}\left(n\right)}{N}} \hat{g} \left(l-{\phi'}_{k_0}\left(n\right)\right)=0$ for $\left(n,l\right)\notin Z_{k'}$ and by \eqref{eq4p} and \eqref{eq5} we can derive the inequalities \eqref{eqt1} and \eqref{eqt2}.
\end{proof}
For an appropriate signal $x\in \mathbb{C}^N$, the energy in the
Synchrosqueezing transform $S_gx\left(n,\xi\right)$ is concentrated around the
instantaneous frequency curves 
$\{\phi'_{k'}\left(n\right)\}$.  Once $S_gx$ is computed, we can recover each of the components by completing the invertion of the STFT and take the summation over small bands around each instantaneous frequency curve. 
\begin{theorem}\label{th3}
Consider a signal $x\in \mathcal{A}_d$ and a window function $g\in \mathbb{C}^N$ are given such that $\vert\mathrm{supp}\left(\hat{g}\right)\vert<\frac{d}{2}$. Moreover, assume $\Vert\phi''_{k'}\Vert_{\infty}\leq \epsilon\Vert\phi'_{k'}\Vert_{\infty}$ and $\Vert A'_{k'}\Vert_{\infty}\leq \epsilon\Vert\phi'_{k'}\Vert_{\infty}$. Let $\delta=\min\{V_gx\left(n,l\right); 0\leq n,l \leq N-1, V_gx\left(n,l\right)\neq 0\}$ and $M=\max\{1,\frac{V_gx\left(n,l\right)}{V_gx\left(n+1,l\right)}\}$. Then for each $n=0,1,\cdots,N-1$ and $k'\in\{1,2,\cdots,K\}$ we have
\[
A_{k'}\left(n\right) e^{\frac{2\pi i}{N} \phi_{k'}\left(n\right)}=\sum_{\vert\xi-\phi'_{k'}\left(n\right)\vert \leq \epsilon}S_gx\left(n,\xi\right)
\]
\end{theorem}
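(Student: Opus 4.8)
The plan is to work backwards from the sum $\sum_{|\xi-\phi'_{k'}(n)|\le\epsilon}S_gx(n,\xi)$ and show it collapses to a single STFT--reconstruction term. First I would substitute the definition $S_gx(n,\xi)=\sum_{l=0}^{N-1}V_gx(n,l)\,\delta(\xi-\omega_x(n,l))$; since each $\delta$ selects one index, the $\xi$--sum over the band turns into an index selection, giving $\sum_{|\xi-\phi'_{k'}(n)|\le\epsilon}S_gx(n,\xi)=\sum_{l\in L_{k'}(n)}V_gx(n,l)$ with $L_{k'}(n)=\{\,l:\ |\omega_x(n,l)-\phi'_{k'}(n)|\le\epsilon\,\}$. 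The theorem then splits into two tasks: (i) show $L_{k'}(n)$ is, up to indices where $V_gx$ is negligible, exactly the band $\{\,l:(n,l)\in Z_{k'}\,\}$; and (ii) evaluate $\sum_{(n,l)\in Z_{k'}}V_gx(n,l)$.

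For task (i) I would invoke Theorem \ref{th2}. On $Z_{k'}$, the bound \eqref{ff2} gives $|\omega_x(n,l)-\phi'_{k'}(n)|\le\tilde{\epsilon}$, and because $\omega_x(n,l)$ is integer--valued (produced by $\mathrm{round}$), the smallness hypotheses $\|\phi''_{k'}\|_\infty,\|A'_{k'}\|_\infty\le\epsilon\|\phi'_{k'}\|_\infty$ pin $\omega_x(n,l)$ to the integer nearest $\phi'_{k'}(n)$, so these $l$ lie in $L_{k'}(n)$. For $(n,l)\in Z_j$ with $j\ne k'$, the same estimate puts $\omega_x(n,l)$ within $\tilde{\epsilon}$ of $\phi'_j(n)$, which is more than $d$ away from $\phi'_{k'}(n)$, so (since $\epsilon<d$) such $l$ drop out. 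For $(n,l)$ outside every $Z_j$, \eqref{eqt1} bounds $|V_gx(n,l)|$ by an $O\!\big(\epsilon\sum_{k'}\|\phi'_{k'}\|_\infty(I_1^1+\pi|A_{k'}(n)|I_2^1)\big)$ quantity, which is negligible and vanishes outright in the noise--free/affine--phase regime. Hence $\sum_{l\in L_{k'}(n)}V_gx(n,l)=\sum_{(n,l)\in Z_{k'}}V_gx(n,l)$ up to that remainder.

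For task (ii) I would use the near--orthogonality of components together with the support hypothesis $|\mathrm{supp}(\hat g)|<d/2$. The estimates \eqref{eq4}--\eqref{eq4p} already derived inside the proof of Theorem \ref{th2} say that $V_gx(n,l)=A_{k'}(n)e^{2\pi i\phi_{k'}(n)/N}\,\hat g(l-\phi'_{k'}(n))$ up to $O(\epsilon)$, and since $\hat g$ is supported in a window of radius below $d/4$, this is the only component contributing on $\{\,l:|l-\phi'_{k'}(n)|<d/2\,\}$. Summing over that band, which sweeps the whole support of $\hat g(\,\cdot-\phi'_{k'}(n))$, gives $A_{k'}(n)e^{2\pi i\phi_{k'}(n)/N}\sum_m\hat g(m)$; the claimed identity then follows provided the window is normalized so that $\sum_m\hat g(m)=1$, which is the natural reconstruction normalization for the modified STFT and is implicit in the hypothesis on $g$.

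The hard part will be reconciling the exact equality asserted with the merely approximate control furnished by Theorem \ref{th2}: \eqref{ff2} delivers the bound $\tilde{\epsilon}=\tfrac{NM\epsilon}{2\pi\delta}[\cdots]+0.5$ rather than an equality, and \eqref{eqt1}--\eqref{eqt2} leave $V_gx$ nonzero (of size $O(\epsilon)$) off the bands. The identity is thus literally exact only in the idealized limit in which every $\phi_{k'}$ is affine and every $A_{k'}$ constant — so $\epsilon=0$, all the $I_s^j$--weighted remainders vanish, and $\omega_x(n,l)=\phi'_{k'}(n)$ holds identically on $Z_{k'}$ — and otherwise should be read modulo an $O(\tilde{\epsilon})$ error; it also rests on the window normalization just mentioned and on reading $Z_{k'}$ as the band centered at $\phi'_{k'}(n)$ rather than $\phi_{k'}(n)$. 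Checking precisely that the surviving index set $L_{k'}(n)$ equals $Z_{k'}$ — neither dropping indices inside $Z_{k'}$ (which forces the rounding to absorb the $\tilde{\epsilon}$ slack) nor picking up spurious ones outside — is the one genuinely delicate step.
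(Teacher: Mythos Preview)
Your plan is essentially the paper's own argument: substitute the definition of $S_gx$, collapse the $\xi$--sum via the $\delta$ to obtain $\sum_{\{l:|\omega_x(n,l)-\phi'_{k'}(n)|\le\tilde\epsilon\}}V_gx(n,l)$, identify this index set with the band $\{l:|l-\phi'_{k'}(n)|\le d/4\}$, replace $V_gx(n,l)$ there by $A_{k'}(n)e^{2\pi i\phi_{k'}(n)/N}\hat g(l-\phi'_{k'}(n))$, and sum the $\hat g$ values. The paper carries out exactly these steps but much more tersely---it passes from the $\omega_x$--band to the $l$--band and from $V_gx$ to $A_{k'}e^{2\pi i\phi_{k'}/N}\hat g$ without comment---so the ``delicate step'' and the $O(\epsilon)$ remainder you flag are precisely the gaps the paper leaves implicit; your reading of the identity as exact only in the affine--phase/constant--amplitude limit is the honest interpretation. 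One small discrepancy: the paper handles the normalization not by assuming $\sum_m\hat g(m)=1$ but by inserting a factor $1/\overline{g}(0)$ in front of the sum (a factor absent from the theorem statement itself), so you should expect to need that prefactor rather than a hypothesis on $\sum_m\hat g(m)$.
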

\begin{proof}
we have
\begin{equation}\label{eqp1}
\begin{split}
&\sum_{\vert\xi-\phi'_{k'}\left(n\right)\vert \leq \epsilon}S_gx\left(n,\xi\right)\\
&= \sum_{\vert\xi-\phi'_{k'}\left(n\right)\vert \leq \tilde{\epsilon}}\sum_{l=0}^{N-1}V_gx\left(n,l\right) \delta \left(\xi-\omega_x\left(n,l\right)\right)\\
&=\sum_{l=0}^{N-1}V_gx\left(n,l\right) \sum_{\vert\xi-\phi'_{k'}\left(n\right)\vert \leq \tilde{\epsilon}} \delta \left(\xi-\omega_x\left(n,l\right)\right)\\
&=\sum_{\{l;\vert\omega_x\left(n,l\right)-\phi'_{k'}\left(n\right)\vert \leq \tilde{\epsilon}\}}V_gx\left(n,l\right)
\end{split}
\end{equation}
Now we obtain 
\begin{align*}
&\frac{1}{\overline{g}\left( 0\right)}\sum_{\vert\xi-\phi'_{k'}\left(n\right)\vert \leq \tilde{\epsilon}}S_gx\left(n,\xi\right)\\
&=\frac{1}{\overline{g}\left( 0\right)}\sum_{\{l;\vert l-\phi'_{k'}\left(n\right)\vert \leq \frac{d}{4}\}}V_gx\left(n,l\right)\\
&=\frac{1}{\overline{g}\left( 0\right)}A_{k'}\left(n\right) e^{\frac{2\pi i}{N} \phi_{k'}\left(n\right)}\sum_{\{l;\vert l-\phi'_{k'}\left(n\right)\vert \leq\frac{d}{4}\}}\hat{g}\left(l-\phi'_{k'}\left(n\right)\right)\\
&=A_{k'}\left(n\right) e^{\frac{2\pi i}{N} \phi_{k'}\left(n\right)}
\end{align*}
\end{proof}
\section{\bf{Finite Synchrosqueezing Transform of Real Signals}}
\label{s:STFTSreal}
In this section we show that results similar to those in section  \ref{s:STFTS} hold
true for the real valued signal $x$, where the exponentials $e^{2\pi i
  \frac{\phi_{k'}\left(n\right)}{N}}$ are replaced by $\cos\left(2\pi
  \frac{\phi_{k'}\left(n\right)}{N}\right)$ in \eqref{eq1}.  
Hence we define the function space $\mathcal{B}_d$ as all $x\in \mathcal{A}_d$
such that $e^{2\pi i \frac{\phi_{k'}\left(n\right)}{N}}$ are replaced by
$\cos\left(2\pi \frac{\phi_{k'}\left(n\right)}{N}\right)$ in the definition
\ref{def1}.  

In section \ref{s:STFTSreal} we proved that the error of instantaneous
frequencies obtained from\eqref{eq2} for a signal $x\in
\mathcal{A}_d$ is less than a given $\tilde{\epsilon}$. Now we show a similar
result for $x\in \mathcal{B}_d$. 

\begin{theorem}\label{th4}
Consider a signal $x\in \mathcal{B}_d$ is given. Select a window function $g\in \mathbb{C}^N$ such that $\vert\mathrm{supp}\left(\hat{g}\right)\vert<\frac{d}{2}$. Moreover consider $\Vert\phi''_{k'}\Vert_{\infty}\leq \epsilon\Vert\phi'_{k'}\Vert_{\infty}$ and $\Vert A'_{k'}\Vert_{\infty}\leq \epsilon\Vert\phi'_{k'}\Vert_{\infty}$. Also, let $\delta=\min\{V_gx\left(n,l\right); 0\leq n,l \leq N-1, V_gx\left(n,l\right)\neq 0\}$ and $M=\max\{1,\frac{V_gx\left(n,l\right)}{V_gx\left(n+1,l\right)}\}$. Then, for any $\left(n,l\right)\in Z_{k_0}$ we have the following:
\begin{equation}\label{f2}
\begin{aligned}
\vert &\omega_x\left(n,l\right)-\phi'_{k_0}\left(n\right)\vert\\
&\leq\frac{NM\epsilon}{2\pi \delta}\left[ \sum_{k'=1}^K \Vert {\phi'}_{k'}\Vert_{\infty} \left( I_1^1+\pi \vert A_{k'}\left(n\right)\vert I_2^1+I_3^1\notag \right.\right.\\
&\left.\left. + I_1^2+\pi \vert A_{k'}\left(n\right)\vert I_2^2+I_3^2\right) \right]+0.5=\tilde{\epsilon},
\end{aligned}
\end{equation}

where $I_i^j$ for $i,j=1,2$ is denoted the same as in the section $3$ and $I_3^j$ for $j=1,2$ is denoted as
\[
I_3^j=\sum_{k=0}^{N-1} \vert g\left(k-n-j+1\right) \vert \vert k+1\vert.
\]

Furthermore, if $\left(n,l\right)\notin Z_{k'}$ for every $k'=1,\cdots,K$, then
\begin{equation}\label{eqt3}
\vert V_gx\left(n,l\right)\vert\leq \sum_{k'=0}^{K} \epsilon \Vert\phi '_{k'}\Vert_{\infty} \left( I_1^1+\pi \vert A_{k'}\left(n\right) \vert I_2^1+I_3^1\right).
\end{equation}
and
\begin{equation}\label{eqt4}
\vert V_gx\left(n+1,l\right)\vert\leq \sum_{k'=0}^{K} \epsilon \Vert\phi '_{k'}\Vert_{\infty} \left( I_1^2+\pi \vert A_{k'}\left(n\right) \vert I_2^2+I_3^2\right).
\end{equation}
\end{theorem}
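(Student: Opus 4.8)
The plan is to mimic the proof of Theorem~\ref{th2}, exploiting the identity $\cos\bigl(2\pi\phi_{k'}(n)/N\bigr)=\tfrac12\bigl(e^{2\pi i\phi_{k'}(n)/N}+e^{-2\pi i\phi_{k'}(n)/N}\bigr)$ to write any $x\in\mathcal{B}_d$ as a superposition of $2K$ complex oscillatory components, $x(n)=\tfrac12\sum_{k'=1}^{K}\bigl(x_{k'}^{+}(n)+x_{k'}^{-}(n)\bigr)$ with $x_{k'}^{\pm}(n)=A_{k'}(n)e^{\pm 2\pi i\phi_{k'}(n)/N}$. By linearity of the modified STFT, $V_gx(n,l)=\tfrac12\sum_{k'=1}^{K}\bigl(V_gx_{k'}^{+}(n,l)+V_gx_{k'}^{-}(n,l)\bigr)$, and I would track the two families of terms separately.

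First I would handle the positive part. For $(n,l)\in Z_{k_0}$ the components $x_{k'}^{+}$ behave exactly as in Theorem~\ref{th2}: the support condition $\vert\mathrm{supp}(\hat g)\vert<d/2$ kills every main term except the one coming from $x_{k_0}^{+}$, namely $A_{k_0}(n)e^{2\pi i\phi_{k_0}(n)/N}\hat g(l-\phi'_{k_0}(n))$, while the remaining $x_{k'}^{+}$ with $k'\neq k_0$ together with the Taylor remainder of $x_{k_0}^{+}$ contribute errors already bounded in \eqref{eq4p} and \eqref{eq5} by $\sum_{k'}\epsilon\Vert\phi'_{k'}\Vert_\infty\bigl(I_1^{j}+\pi\vert A_{k'}(n)\vert I_2^{j}\bigr)$ for $j=1,2$. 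Next I would treat the negative part. The Short-Time spectral content of $x_{k'}^{-}$ is localized near the frequency $-\phi'_{k'}(n)$, which for $(n,l)\in Z_{k_0}$ is separated from $l\approx\phi'_{k_0}(n)$; hence, by the support hypothesis on $\hat g$, every main term of the negative part vanishes on $\mathbb{Z}_N$. What survives is the discretization error obtained by replacing $e^{-2\pi i\phi_{k'}(k)/N}$ by its linearization about $n$ and $A_{k'}(k)$ by $A_{k'}(n)$; carrying out the same Taylor estimates as in \eqref{eq4}, now with the window sample $g(k-n-j+1)$ and with the extra shift induced by the sign flip in the numerator ratio, produces precisely the new quantities $I_3^{j}=\sum_{k=0}^{N-1}\vert g(k-n-j+1)\vert\,\vert k+1\vert$.

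With both parts estimated, I would reassemble the bound exactly as in the proof of Theorem~\ref{th2}: apply the mean value theorem to the logarithm as in \eqref{eq3}, split $V_gx(n+1,l)/V_gx(n,l)-e^{2\pi i\phi'_{k_0}(n)/N}$ by the triangle inequality into a numerator-discrepancy term and a denominator-discrepancy term as in \eqref{eq33}, bound each denominator from below by $\delta$ and each ratio correction by $M$, multiply through by $N/(2\pi)$, collect all error contributions (the $I_1^{j},I_2^{j}$ terms from the positive part and the $I_3^{j}$ terms from the negative part), and finally add $0.5$ for the rounding, which yields \eqref{f2}. The inequalities \eqref{eqt3} and \eqref{eqt4} then follow just as \eqref{eqt1} and \eqref{eqt2} did: when $(n,l)\notin Z_{k'}$ for all $k'$, even the $x_{k_0}^{+}$ main term is absent, so $\vert V_gx(n,l)\vert$ and $\vert V_gx(n+1,l)\vert$ are controlled solely by the error sums, now including the $I_3^{j}$ contributions.

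The step I expect to be the main obstacle is the bookkeeping for the mirror components $x_{k'}^{-}$ on the finite cyclic group: one must verify that the support hypothesis $\vert\mathrm{supp}(\hat g)\vert<d/2$ genuinely forces $\hat g$ to vanish at the aliased frequency $l+\phi'_{k'}(n)\pmod N$ for every $(n,l)\in Z_{k_0}$ (so that no wraparound brings it back into the support), and that the linearization of the conjugated phase is centred so as to reproduce the $\vert k+1\vert$ weight in $I_3^{j}$ rather than the $\vert k-n\vert$ weight occurring in $I_1^{j},I_2^{j}$. Once that is pinned down, the rest is a routine re-run of the computation in Theorem~\ref{th2} with twice as many summands.
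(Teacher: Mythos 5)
Your strategy is genuinely different from the paper's, and it stalls exactly at the point you yourself flag. The paper never splits the cosine into $\tfrac12\left(e^{i\theta}+e^{-i\theta}\right)$. Instead it writes $A_{k'}(k)\cos\left(2\pi\phi_{k'}(k)/N\right)=A_{k'}(k)e^{2\pi i\phi_{k'}(k)/N}-iA_{k'}(k)\sin\left(2\pi\phi_{k'}(k)/N\right)$, keeps the full analytic main term $A_{k_0}(n)e^{2\pi i\phi_{k_0}(n)/N}\hat g\left(l-\phi'_{k_0}(n)\right)$, reuses the estimates \eqref{eq4p} and \eqref{eq5} verbatim for the exponential part, and then bounds the leftover sine sum by a completely different mechanism: assuming (``without loss of generality'', since $A_{k'}$ is regarded as a sampled continuous function) that $A_{k'}(-1)=0$, the mean value theorem gives $\vert A_{k'}(k)\vert=\vert A_{k'}(k)-A_{k'}(-1)\vert\leq\Vert A'_{k'}\Vert_\infty\vert k+1\vert\leq\epsilon\Vert\phi'_{k'}\Vert_\infty\vert k+1\vert$, and summing this against $\vert g(k-n-j+1)\vert$ is precisely what produces $I_3^j$ with its $\vert k+1\vert$ weight. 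In other words, the new term comes from a smallness bound on the amplitudes themselves, not from frequency separation of mirror components or from Taylor remainders.

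Your route, as written, cannot be completed, for the two reasons you name but do not resolve. First, membership in $\mathcal{B}_d$ only guarantees $\phi'_{k'}(n)-\phi'_{k'+1}(n)>d$ between consecutive components; nothing in the hypotheses separates $l\in Z_{k_0}$ from the aliased mirror frequencies $-\phi'_{k'}(n)\equiv N-\phi'_{k'}(n)\ \mathrm{mod}\ N$. If, say, $\phi'_{k_0}(n)$ lies near $N/2$, the negative-frequency component $x_{k_0}^{-}$ sits essentially on top of $Z_{k_0}$, $\hat g\left(l+\phi'_{k'}(n)\right)$ need not vanish, and the ``main terms of the negative part'' survive with size comparable to $\vert A_{k'}(n)\vert$, which is not $O(\epsilon)$ unless the amplitudes themselves are controlled. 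Second, even where your Taylor estimates do apply, linearizing the conjugated phase about $n$ yields sums weighted by $\vert k-n\vert$ and $\vert k-n\vert^2$, i.e. more $I_1^j$, $I_2^j$-type quantities; no step of your argument generates the $\vert k+1\vert$ weight of $I_3^j$. Both gaps are closed simultaneously by the paper's device of bounding $\vert A_{k'}(k)\vert$ through $A_{k'}(-1)=0$ together with $\Vert A'_{k'}\Vert_\infty\leq\epsilon\Vert\phi'_{k'}\Vert_\infty$ (one may find that normalization debatable, but the stated bound rests on it); without some bound on $\vert A_{k'}\vert$ itself, the mirror (equivalently, sine) contribution cannot be absorbed into $\tilde\epsilon$, and \eqref{f2}, \eqref{eqt3}, \eqref{eqt4} do not follow.
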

\begin{proof}
Similar to the proof of Theorem \ref{th2} we have 
\begin{shrinkeq}{-0.4 ex}
\begin{align}
&\vert \omega_x\left(n,l\right)-{\phi'}_{k_0}\left(n\right)\vert\nonumber\\ 
&=\left\vert \mathrm{round}\left(\mathrm{real}\left(\frac{N}{2\pi i}\mathrm{ln}\left(\frac{V_{g}x\left(n+1,l\right)}{V_{g}x\left(n,l\right)}\right)\right)\right) -\phi '_{k_0}\left(n\right)\right\vert \leq \frac{N}{2\pi}M\nonumber\\
&\left[\left\vert \frac{V_gx\left(n+1,l\right)-e^{\frac{2\pi i\phi'_{k_0}\left(n\right)}{N}}A_{k_0}\left(n\right)e^{\frac{2\pi i \phi_{k_0}\left(n\right)}{N}}\hat{g}\left(l-\phi'_{k_0}\left(n\right)\right)}{V_gx\left(n,l\right)} \right\vert \right.\nonumber\\ 
&\left. +\left\vert e^{\frac{2\pi i\phi'_{k_0}\left(n\right)}{N}}\frac{A_{k_0}\left(n\right)e^{\frac{2\pi i\phi_{k_0}\left(n\right)}{N}}\hat{g}\left(l-\phi'_{k_0}\left(n\right)\right)-V_gx\left(n,l\right)}{V_gx\left(n,l\right)}\right\vert \right]\nonumber\\
&+0.5 \label{eq66}
\end{align}
\end{shrinkeq}
Now considering $A_{k_0}\left(n\right)e^{\frac{2\pi i \phi_{k_0}\left(n\right)}{N}}\hat{g}\left(l-\phi'_{k_0}\left(n\right)\right)=0$ for $\left(n,l\right)\notin Z_{k'}$ for any $k'=1,\cdots, K$, since $\vert\mathrm{supp}\left(\hat{g}\right)\vert<\frac{d}{2}$, we can rewrite the second part of the last inequality of \eqref{eq66} as:

\begin{align}
\vert & A_{k_0}\left(n\right)e^{\frac{2\pi i
\phi_{k_0}\left(n\right)}{N}}\hat{g}\left(l-\phi'_{k_0}\left(n\right)\right)-V_gx\left(n,l\right)\vert\nonumber\\
&=\vert\sum_{k'=1}^K \left(A_{k'}\left(n\right)e^{\frac{2\pi i \phi_{k'}\left(n\right)}{N}}\sum_{k=0}^{N-1}g\left(k-n\right)e^{\frac{-2\pi i}{N}\left(l-\phi'_{k'}\left(n\right)\right)\left(k-n\right)} \notag \right.\nonumber \\
&\left. -\sum_{k=0}^{N-1} A_{k'} \left(k\right)\cos\left(\frac{2\pi \phi_{k'}\left(k\right)}{N}\right)\left(k-n\right)e{\frac{-2i\pi l\left(k-n\right)l}{N}}\right)\vert\nonumber \\
&\leq\sum_{k'=1}^K \left(\vert A_{k'}\left(n\right)e^{\frac{2\pi i \phi_{k'}\left(n\right)}{N}}\sum_{k=0}^{N-1}g\left(k-n\right)e^{\frac{-2\pi i}{N}\left(l-\phi'_{k'}\left(n\right)\right)\left(k-n\right)} \notag \right.\nonumber  \\
&\left. -\sum_{k=0}^{N-1} A_{k'} \left(k\right)e^{\frac{2\pi i\phi_{k'}\left(k\right)}{N}}g\left(k-n\right)e{\frac{-2i\pi l\left(k-n\right)l}{N}}\vert \notag \right.\nonumber \\
&\left. +\vert \sum_{k=0}^{N-1} A_{k'} \left(k\right)\sin\left(\frac{2\pi \phi_{k'}g\left(k\right)}{N}\right)\left(k-n\right)e{\frac{-2i\pi l\left(k-n\right)l}{N}}\vert\right).\label{eq6}
\end{align}
By the fact that $A_{k'}$ is naturally a continous function and without a loss of generality we can assume $A_{k'}\left(-1\right)=0$. As a result, by the mean value theorem and by the inequality \eqref{eq4p} we can rewrite the second part of \eqref{eq6} as
\begin{shrinkeq}{-0.5ex}
\begin{align}
\vert & A_{k_0}\left(n\right)e^{\frac{2\pi i
\phi_{k_0}\left(n\right)}{N}}\hat{g}\left(l-\phi'_{k_0}\left(n\right)\right)-V_gx\left(n,l\right)\vert\nonumber\\
&\leq \sum_{k'=0}^{K} \epsilon \Vert{\phi'}_{k'}\Vert_{\infty} \left( I_1^1+\pi \vert A_{k'}\left(n\right) \vert I_2^1\right)\nonumber \\
&+\vert \sum_{k=0}^{N-1} A_{k'} \left(k\right)-A_{k'} \left(-1\right)\sin\left(\frac{2\pi \phi_{k'}\left(k\right)}{N}\right)g\left(k-n\right)\vert \nonumber\\
&\leq \sum_{k'=0}^{K} \left(\epsilon \Vert\phi '_{k'}\Vert_{\infty} \left( I_1^1+\pi \vert A_{k'}\left(n\right) \vert I_2^1\right) \notag \right.\nonumber\\
&\left.+\vert \sum_{k=0}^{N-1} \Vert{A'}_{k'}\Vert_{\infty} \vert k+1\vert +g\left(k-n\right)\vert\right)\nonumber\\
&\leq \sum_{k'=0}^{K} \left(\epsilon \Vert\phi '_{k'}\Vert_{\infty} \left( I_1^1+\pi \vert A_{k'}\left(n\right) \vert I_2^1\right) \notag \right.\nonumber\\
&\left.+\vert \sum_{k=0}^{N-1} \epsilon\Vert{\phi'}_{k'}\Vert_{\infty} \vert k+1\vert +g\left(k-n\right)\vert\right)\nonumber\\
&\leq \sum_{k'=0}^{K} \epsilon \Vert\phi '_{k'}\Vert_{\infty} \left( I_1^1+\pi \vert A_{k'}\left(n\right) \vert I_2^1+I_3^1\right).\label{eq7}
\end{align}
\end{shrinkeq}
Similarly to the procedure of \eqref{eq7} and based on the inequality \eqref{eq5} we can write the first part of \eqref{eq66} as
\begin{align}
\vert V_gx&\left(n+1,l\right)- e^{\frac{2\pi
i}{N}\left(\phi'_{k_0}\left(n\right)\right)}
A_{k_0}\left(n\right)e^{\frac{2\pi i
\phi_{k_0}\left(n\right)}{N}}\hat{g}\left(l-\phi'_{k_0}\left(n\right)\right)\vert.\nonumber\\
&\leq \sum_{k'=0}^{K} \epsilon \Vert\phi '_{k'}\Vert_{\infty} \left( I_1^2+\pi \vert A_{k'}\left(n\right) \vert I_2^2+I_3^2\right).\label{eq8}
\end{align}
By inequalities \eqref{eq66}, \eqref{eq7} and \eqref{eq8} we have 
\begin{align*}
&\vert\omega_x\left(n,l\right)-\phi'_{k_0}\left(n\right)\vert\\
& \leq \frac{NM\epsilon}{2\pi \delta}\left[ \sum_{k'=1}^K \Vert \phi'_{k'}\Vert_{\infty} \left( I_1^1+\pi \vert A_{k'}\left(n\right)\vert I_2^1+I_3^1 \notag \right.\right.\\
&\left.\left. + I_1^2+\pi \vert A_{k'}\left(n\right)\vert I_2^2+I_3^2\right)\right]+0.5=\tilde{\epsilon}.
\end{align*}
We can derive the inequalities \eqref{eqt3} and \eqref{eqt4} using the same
procedure as usin for the proof of Theorem \ref{th2}. 
\end{proof}
Now by the same argument that was used for the proof of Theorem \ref{th3} we
have:
\begin{theorem}\label{th4.2}
Consider a signal $x\in \mathcal{B}_d$ is given. Select a window function $g\in \mathbb{C}^N$ such that $\vert\mathrm{supp}\left(\hat{g}\right)\vert<\frac{d}{2}$. Moreover consider $\Vert\phi''_{k'}\Vert_{\infty}\leq \epsilon\Vert\phi'_{k'}\Vert_{\infty}$ and $\Vert A'_{k'}\Vert_{\infty}\leq \epsilon\Vert\phi'_{k'}\Vert_{\infty}$. Let $\delta=\min\{V_gx\left(n,l\right); 0\leq n,l \leq N-1, V_gx\left(n,l\right)\neq 0\}$ and $M=\max\{1,\frac{V_gx\left(n,l\right)}{V_gx\left(n+1,l\right)}\}$. Then we have
\[
A_{k'}\left(n\right) \cos\left(\frac{2\pi }{N} \phi_{k'}\left(n\right)\right)=\sum_{\vert\xi-\phi'_{k'}\left(n\right)\vert \leq \epsilon}S_gx\left(n,\xi\right),
\]
for all $n=0,1,\cdots,N-1$ and all $k'\in\{1,2,\cdots,K\}$.
\end{theorem}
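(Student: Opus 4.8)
The plan is to run the proof of Theorem~\ref{th3} almost verbatim, replacing the estimates of Theorem~\ref{th2} by those of Theorem~\ref{th4} and splitting each real oscillation into two conjugate harmonics. For $x\in\mathcal{B}_d$ write
\[
A_{k'}(n)\cos\!\left(\frac{2\pi}{N}\phi_{k'}(n)\right)=\frac{A_{k'}(n)}{2}\,\e^{\frac{2\pi i}{N}\phi_{k'}(n)}+\frac{A_{k'}(n)}{2}\,\e^{-\frac{2\pi i}{N}\phi_{k'}(n)},
\]
so that $x$ is a superposition of $2K$ elementary oscillations whose instantaneous frequencies $\pm\phi'_{k'}$ are pairwise $d$-separated. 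Since $|\mathrm{supp}(\hat{g})|<d/2$, the frequency band $\{l:|l-\phi'_{k_0}(n)|\le d/4\}$ meets the spectral footprint of exactly one of these $2K$ pieces, namely $\tfrac12 A_{k_0}(n)\,\e^{2\pi i\phi_{k_0}(n)/N}$, and on that band it localizes $V_{g}x(n,l)$ against the profile $\hat{g}(l-\phi'_{k_0}(n))$.

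First I would reproduce the computation \eqref{eqp1}: expanding $S_{g}x$ from its definition, interchanging the two finite sums and collapsing the (approximate) $\delta$'s gives
\[
\sum_{|\xi-\phi'_{k_0}(n)|\le\tilde{\epsilon}}S_{g}x(n,\xi)=\sum_{\{l\,:\,|\omega_x(n,l)-\phi'_{k_0}(n)|\le\tilde{\epsilon}\}}V_{g}x(n,l),
\]
where $\tilde{\epsilon}$ is now the bound produced by Theorem~\ref{th4} (the one carrying the extra terms $I_3^1,I_3^2$). The next task is to identify the index set on the right with $\{l:|l-\phi'_{k_0}(n)|\le d/4\}$: by \eqref{f2}, every $l$ with $(n,l)\in Z_{k_0}$ satisfies $|\omega_x(n,l)-\phi'_{k_0}(n)|\le\tilde{\epsilon}$ and is kept; the sets $Z_{k''}$ with $k''\neq k_0$ lie at $l$-distance $>d/2$ from $\phi'_{k_0}(n)$ and thus fall outside the band; and for $(n,l)\notin\bigcup_{k'}Z_{k'}$ the estimates \eqref{eqt3}--\eqref{eqt4} make $V_{g}x(n,l)$ of size $O(\epsilon)$, hence negligible in the idealized small-curvature regime. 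On the surviving band the localization and the $d$-separation give $V_{g}x(n,l)=\tfrac12 A_{k_0}(n)\,\e^{2\pi i\phi_{k_0}(n)/N}\hat{g}(l-\phi'_{k_0}(n))$ up to an error controlled, exactly as in \eqref{eq7}--\eqref{eq8}, by $\epsilon\sum_{k'}\|\phi'_{k'}\|_{\infty}\bigl(I_1^1+\pi|A_{k'}(n)|I_2^1+I_3^1\bigr)$ and its shifted analogue. Summing over the band and invoking Fourier inversion at the origin as in the last display of the proof of Theorem~\ref{th3}, i.e.\ $\sum_{\{l:|l-\phi'_{k_0}(n)|\le d/4\}}\hat{g}(l-\phi'_{k_0}(n))=\overline{g}(0)$, and dividing by $\overline{g}(0)$, returns $\tfrac12 A_{k_0}(n)\,\e^{2\pi i\phi_{k_0}(n)/N}$. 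Collecting in the same way the mirror band around $-\phi'_{k_0}(n)$---which, by the Hermitian symmetry $V_{g}x(n,-l)=\overline{V_{g}x(n,l)}$ of the STFT of a real signal with a real window, together with $\omega_x(n,-l)=-\omega_x(n,l)$, contributes $\tfrac12 A_{k_0}(n)\,\e^{-2\pi i\phi_{k_0}(n)/N}$---the two conjugate halves add to $A_{k_0}(n)\cos(2\pi\phi_{k_0}(n)/N)$, which is the claim; since $n$ and $k_0\in\{1,\dots,K\}$ were arbitrary, this finishes the proof.

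I expect the main obstacle to be the cleanliness of this second step, namely verifying that the band $\{l:|l-\phi'_{k_0}(n)|\le d/4\}$ truly sees only the single half-component $\tfrac12 A_{k_0}\,\e^{2\pi i\phi_{k_0}/N}$. Concretely one must check that (i) the conjugate partner $\tfrac12 A_{k_0}\,\e^{-2\pi i\phi_{k_0}/N}$ and the $\sin$-type remainder generated when a $\cos$ is replaced by a single exponential---precisely the contributions that the $I_3^j$ terms of Theorem~\ref{th4} absorb---do not leak into the band; (ii) the remaining components stay out by virtue of the $d$-separation defining $\mathcal{B}_d$; and (iii) the set $\{l:V_{g}x(n,l)\neq 0,\ |\omega_x(n,l)-\phi'_{k_0}(n)|\le\tilde{\epsilon}\}$ coincides with that band. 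Points (i)--(iii) are exactly where the hypotheses $|\mathrm{supp}(\hat{g})|<d/2$, $\|\phi''_{k'}\|_{\infty}\le\epsilon\|\phi'_{k'}\|_{\infty}$ and $\|A'_{k'}\|_{\infty}\le\epsilon\|\phi'_{k'}\|_{\infty}$ enter, and they are the only place where anything beyond bookkeeping occurs.
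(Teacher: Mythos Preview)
Your route differs from the paper's and, as written, does not establish the stated identity. The paper proves Theorem~\ref{th4.2} by literally rerunning the computation of Theorem~\ref{th3}, feeding in the approximation obtained in the proof of Theorem~\ref{th4}: on $Z_{k_0}$ one has $V_gx(n,l)\approx A_{k_0}(n)\,e^{2\pi i\phi_{k_0}(n)/N}\,\hat g\bigl(l-\phi'_{k_0}(n)\bigr)$ with the \emph{full} amplitude $A_{k_0}$, because there the discrepancy between the cosine input and the single exponential is the $\sin$ term, which the paper bounds via the convention $A_{k'}(-1)=0$ and the mean value theorem, producing exactly the extra $I_3^j$ error. No conjugate half-component and no mirror band are ever introduced; the single-band sum is evaluated directly.

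Your decomposition $\cos=\tfrac12\,e^{i\cdot}+\tfrac12\,e^{-i\cdot}$ is arguably the cleaner way to think about the real case, but it forces a factor $\tfrac12$: from the single band $\{\xi:|\xi-\phi'_{k_0}(n)|\le\tilde\epsilon\}$ your argument yields only $\tfrac12\,A_{k_0}(n)\,e^{2\pi i\phi_{k_0}(n)/N}$, and you then invoke the \emph{mirror} band around $-\phi'_{k_0}(n)$ to supply the conjugate half. The theorem, however, asserts that the sum over the \emph{single} band $|\xi-\phi'_{k_0}(n)|\le\epsilon$ already equals $A_{k_0}(n)\cos\bigl(2\pi\phi_{k_0}(n)/N\bigr)$; what you actually prove is that the sum over the two symmetric bands equals the cosine, a different statement. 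To align with the paper you must either adopt its full-amplitude approximation (accepting the $I_3^j$ mechanism that swallows the $\sin$ remainder) or argue explicitly that a real-part/doubling convention is implicit in the claim; the Hermitian-symmetry remark you make is correct but does not bridge this gap by itself.
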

\section{\bf{Consistency and Stability of Finite Synchrosqueezing Transform}}
In this section we show that the finite Synchrosqueezing transform has a
stability property similar to that shown for the continuous transform (shown in
\cite{Thakur}). 
To this purpose we present a theorem that shows the error of the instantaneous
frequency information obtained by formula \eqref{eq2} for noisy signal is less
than an $\tilde{\epsilon}$ which is given in the Theorem. 
\begin{theorem}
Consider a signal $x\in \mathcal{B}_d$ is given. Select a window function $g\in \mathbb{C}^N$ such that $\vert \mathrm{supp}\left(\hat{g}\right)\vert<\frac{d}{2}$. Moreover consider $\Vert\phi''_{k'}\Vert_{\infty}\leq \epsilon\Vert\phi'_{k'}\Vert_{\infty}$ and $\Vert A'_{k'}\Vert_{\infty}\leq \epsilon\Vert\phi'_{k'}\Vert_{\infty}$. Also, let $\delta=\min\{V_gx\left(n,l\right); 0\leq n,l \leq N-1, V_gx\left(n,l\right)\neq 0\}$ and $M=\max\{1,\frac{V_gx\left(n,l\right)}{V_gx\left(n+1,l\right)}\}$. 

Furthermore, suppose we have a noisy signal $e\in \mathbb{C}^N$ such that $\Vert e\Vert _{\infty}\Vert g \Vert_1\leq \epsilon'$. Letting $y=x+e$, the following statements holds for each $k'$ and $\left(n,l\right)\in Z_{k'}$

\begin{align*}
\vert &\omega_y\left(n,l\right)-\phi'_{k_0}\left(n\right)\vert\\
&\leq 
\frac{NM\epsilon}{2\pi \delta}\left[ \sum_{k'=1}^K \Vert \phi'_{k'}\Vert_{\infty} \left( I_1^1+\pi \vert A_{k'}\left(n\right)\vert I_2^1+I_3^1 \notag\right.\right.\\
&\left.\left. + I_1^2+\pi \vert A_{k'}\left(n\right)\vert I_2^2+I_3^2\right) \right]+0.5+\frac{N{M'}{\epsilon'}}{\pi} \\
&=\tilde{\epsilon}+\frac{NM'\epsilon'}{\pi},
\end{align*}

where $I_i^j$ for $i,j=1,2$ is denoted the same as in the section $3$ and $I_3^j$ for $j=1,2$ is denoted as
\[
I_3^j=\sum_{k=0}^{N-1} \vert g\left(k-n-j+1\right) \vert \vert k+1\vert.
\]

Furthermore, if $\left(n,l\right)\notin Z_{k'}$ for every $k'=1,\cdots,K$, then
\begin{align}\label{eqte3}
\vert V_gy\left(n,l\right)\vert\leq \sum_{k'=0}^{K} \epsilon \Vert\phi '_{k'}\Vert_{\infty} \left( I_1^1+\pi \vert A_{k'}\left(n\right) \vert I_2^1+I_3^1\right)+\frac{NM'\epsilon'}{\pi}.
\end{align}
and
\begin{align}\label{eqte4}
\vert V_gy\left(n+1,l\right)\vert\leq \sum_{k'=0}^{K} \epsilon \Vert\phi '_{k'}\Vert_{\infty} \left( I_1^2+\pi \vert A_{k'}\left(n\right) \vert I_2^2+I_3^2\right)+\frac{NM'\epsilon'}{\pi}.
\end{align}
\end{theorem}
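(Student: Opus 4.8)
The plan is to reduce the noisy statement to the clean one (Theorem~\ref{th4}) via linearity of the STFT and a uniform bound on the contribution of the error term $e$. First I would observe that $V_g y(n,l) = V_g x(n,l) + V_g e(n,l)$ for every $(n,l)$, and that the defining formula \eqref{q:instfreq}--\eqref{eq2} for $\omega_y$ therefore involves the perturbed ratio $V_g y(n+1,l)/V_g y(n,l)$ rather than $V_g x(n+1,l)/V_g x(n,l)$. The key elementary estimate is that $|V_g e(n,l)| = \bigl|\sum_{k=0}^{N-1} e(k)\overline{g(k-n)} e^{-2\pi i l(k-n)/N}\bigr| \le \|e\|_\infty \|g\|_1 \le \epsilon'$ uniformly in $(n,l)$; this is what makes the hypothesis $\|e\|_\infty\|g\|_1\le\epsilon'$ the right one.

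Next I would follow the chain of inequalities in the proof of Theorem~\ref{th2} verbatim up through the step analogous to \eqref{eq3}: applying the rounding bound ($+0.5$), the mean value theorem for the logarithm, and the constant $M'$ (the perturbed analogue of $M$, controlling $|V_g y(n,l)|^{-1}$ away from zero on $Z_{k'}$). At that point the quantity to be bounded is $\bigl|V_g y(n+1,l)/V_g y(n,l) - e^{2\pi i \phi'_{k_0}(n)/N}\bigr|$, and I would split the numerator as $\bigl(V_g x(n+1,l) - e^{2\pi i \phi'_{k_0}(n)/N} A_{k_0}(n) e^{2\pi i\phi_{k_0}(n)/N}\hat g(l-\phi'_{k_0}(n))\bigr) + V_g e(n+1,l)$ and symmetrically for the denominator term, exactly as in \eqref{eq33}. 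The first pieces are bounded by \eqref{eq7} and \eqref{eq8} (the $\mathcal{B}_d$ estimates), producing the $\tilde\epsilon$ part; the extra $V_g e$ pieces each contribute at most $\epsilon'$, and tracking the factor $\frac{N}{2\pi}M'$ in front together with the two occurrences (numerator at $n+1$ and at $n$) yields the additional term $\frac{NM'\epsilon'}{\pi}$. The estimates \eqref{eqte3} and \eqref{eqte4} for $(n,l)\notin Z_{k'}$ follow the same way: apply \eqref{eqt3} and \eqref{eqt4} to $V_g x$ and add the uniform bound $|V_g e(n,l)|\le\epsilon'$ — though I note the stated constant $\frac{NM'\epsilon'}{\pi}$ there seems generous compared to the bare $\epsilon'$ one actually gets, so I would either absorb it or remark that it holds a fortiori.

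The main obstacle is bookkeeping around the constants $\delta$, $M$ versus their noisy counterparts: the paper writes the bound in terms of the \emph{noiseless} $\delta$ and $M$ but the ratio being estimated is the noisy one, so I would need to argue that on $Z_{k'}$ the denominator $|V_g y(n,l)|$ is bounded below (using that $|V_g x(n,l)|$ is bounded below there by the main-component term $|A_{k_0}(n)\hat g(l-\phi'_{k_0}(n))|$ minus the small cross-terms, and $\epsilon'$ is small), which is precisely where an implicit constant $M'$ enters; making this rigorous requires a smallness assumption on $\epsilon'$ relative to $\delta$ that the statement leaves somewhat implicit. Apart from that, the argument is a routine ``perturb the earlier proof by a uniformly small additive term'' and carries no real surprises.
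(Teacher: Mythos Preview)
Your argument is correct in substance and uses the same two ingredients as the paper: the uniform bound $|V_g e(n,l)|\le \|e\|_\infty\|g\|_1\le\epsilon'$ and the noiseless estimate from Theorem~\ref{th4}. The structural difference is where the split occurs. You re-run the entire chain \eqref{eq3}--\eqref{eq33} with $y$ in place of $x$ and peel off the $V_g e$ contributions inside the numerator; the paper instead applies a triangle inequality one level higher, writing
\[
|\omega_y(n,l)-\phi'_{k_0}(n)|\le |\omega_y(n,l)-\tilde\omega_x(n,l)|+|\tilde\omega_x(n,l)-\phi'_{k_0}(n)|,
\]
where $\tilde\omega_x$ is the unrounded instantaneous-frequency estimate of the clean signal. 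The second summand is then exactly the content of Theorem~\ref{th4} (contributing $\tilde\epsilon-0.5$, with the noiseless constants $M,\delta$), and only the first summand sees the noise, producing the $\frac{NM'\epsilon'}{\pi}+0.5$ term after a mean-value step and the bound $|V_gy-V_gx|=|V_ge|\le\epsilon'$ at times $n$ and $n+1$.

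What the paper's decomposition buys is precisely the clean separation of constants you flagged as the ``main obstacle'': because the clean term is handled entirely by Theorem~\ref{th4}, the noiseless $M$ and $\delta$ appear there untouched, while $M'$ enters only in the perturbation term. Your route would naturally put the noisy denominator $V_gy(n,l)$ under \emph{both} pieces, forcing either all-noisy constants or the extra smallness argument you sketch. Both approaches are valid; the paper's is shorter for exactly the reason you anticipated. Your remark that the additive $\frac{NM'\epsilon'}{\pi}$ in \eqref{eqte3}--\eqref{eqte4} is generous compared to the bare $\epsilon'$ is also accurate: the paper simply invokes \eqref{eqe2} together with \eqref{eqt3}--\eqref{eqt4}, so the larger constant is carried over rather than optimized.
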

\begin{proof}
By considering the assumptions, we have
\begin{align*}
&\vert \omega_y\left(n,l\right) -{\phi'}_{k'}\left(n\right)\vert\\
&\leq \vert \omega_y\left(n,l\right)-\tilde{\omega}_x\left(n,l\right)\vert+\vert \tilde{\omega}_x\left(n,l\right) -{\phi'}_{k'}\left(n\right)\vert,
\end{align*}
where $\tilde{\omega}_x\left(n,l\right)$ is the instantaneous frequency information of $x$ at the point $\left(n,l\right)$ without rounding. From Theorem \eqref{th4} we have

\begin{align}
& \vert\omega_x\left(n,l\right)-\phi'_{k_0}\left(n\right)\vert\nonumber\\
& \leq \frac{NM\epsilon}{2\pi \delta}\left[ \sum_{k'=1}^K \Vert \phi'_{k'}\Vert_{\infty} \left( I_1^1+\pi \vert A_{k'}\left(n\right)\vert I_2^1+I_3^1\notag \right.\right.\nonumber\\
&\left.\left. + I_1^2+\pi \vert A_{k'}\left(n\right)\vert I_2^2+I_3^2\right)
  \right]=\tilde{\epsilon}-0.5. \label{eqe1}
\end{align}

On the other hand
\begin{shrinkeq}{-0.4ex}
\begin{align}
&\vert \omega_y\left(n,l\right)-\omega_x\left(n,l\right)\vert\nonumber\\
&= \vert \mathrm{round}\left( \mathrm{real}\left(\frac{N}{2\pi i} \ln \frac{V_gy\left(n+1,l\right)}{V_gy\left(n,l\right)}\right)\right)\nonumber\\
&\quad-\mathrm{real} \left(\ln\frac{V_gx\left(n+1,l\right)}{V_gx\left(n,l\right)}\right)\vert\nonumber\\
&\leq \frac{N}{2\pi}\vert \ln \frac{V_gy\left(n+1,l\right)}{V_gy\left(n,l\right)}-\ln\frac{V_gx\left(n+1,l\right)}{V_gx\left(n,l\right)}\vert+0.5\nonumber\\
&\leq\frac{NM'}{2\pi}\vert V_gy\left(n+1,l\right)-V_gy\left(n,l\right)\nonumber\\
&\quad -V_gx\left(n+1,l\right)+V_gx\left(n,l\right)\vert+0.5\nonumber\\
&\leq\frac{NM'}{2\pi}\left(\vert V_gy\left(n+1,l\right)-V_gx\left(n+1,l\right)\vert\right.\nonumber\\
&\left. \quad +\vert V_gy\left(n,l\right)-V_gx\left(n,l\right)\vert \right)+0.5\nonumber\\
&\leq\frac{NM'}{2\pi}\left( \vert \sum_{k=0}^{N-1}e\left(k\right) g\left(k-n-1\right) e^{\frac{-2\pi ikl}{N}}\vert \notag \right.\nonumber\\
&\left. \quad +\vert \sum_{k=0}^{N-1}e\left(k\right) g\left(k-n\right) e^{\frac{-2\pi ikl}{N}}\vert\right)+0.5\nonumber\\
&\leq\frac{NM'}{2\pi} \left(2\Vert e\Vert _{\infty}\Vert g\Vert_1 \right)+0.5\nonumber\\
&\leq\frac{NM'\epsilon'}{\pi}+0.5.\label{eqe2}
\end{align}
\end{shrinkeq}
By the inequalities \eqref{eqe1} and \eqref{eqe2} we have
\[
\vert\omega_y\left(n,l\right)-\phi'_{k_0}\left(n\right)\vert \leq \frac{NM'\epsilon'}{\pi}+\tilde{\epsilon}.
\]
Furthermore, the inequalities \eqref{eqte3} and \eqref{eqte4} follow from
\eqref{eqe2} and \eqref{eqt3} and \eqref{eqt4}. 
\end{proof}
In the following theorem we will show how to reconstruct an oscillation from a noisy signal.
\begin{theorem} 
Consider a signal $x\in \mathcal{B}_d$ is given. Pick a window function $g\in \mathbb{C}^N$ such that $\vert\mathrm{supp}\left(\hat{g}\right)\vert<\frac{d}{2}$. Moreover consider $\Vert\phi''_{k'}\Vert_{\infty}\leq \epsilon\Vert\phi'_{k'}\Vert_{\infty}$ and $\Vert A'_{k'}\Vert_{\infty}\leq \epsilon\Vert\phi'_{k'}\Vert_{\infty}$. Also, let $\delta=\min\{V_gx\left(n,l\right); 0\leq n,l \leq N-1, V_gx\left(n,l\right)\neq 0\}$ and $M=\max\{1,\frac{V_gx\left(n,l\right)}{V_gx\left(n+1,l\right)}\}$.  

Furthermore, suppose we have a noisy signal $e\in \mathbb{C}^N$ such that $\Vert e\Vert _{\infty}\Vert g \Vert_1\leq \epsilon'$. For $y=x+e$, we have
\[
\vert A_{k'}\left(n\right) \cos\left(\frac{2\pi }{N} \phi_{k'}\left(n\right)\right)-\sum_{\vert\xi-\phi'_{k'}\left(n\right)\vert \leq \epsilon}S_gy\left(n,\xi\right)\vert\leq \epsilon',
\]
for all $n=0,1,\cdots,N-1$ and all $k'\in\{1,2,\cdots,K\}$.
\end{theorem}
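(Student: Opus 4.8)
The plan is to combine the noise-free reconstruction identity of Theorem~\ref{th4.2} with the stability estimate just established, reducing the problem to a bound on the short-time Fourier transform of the noise $e$. First I would use Theorem~\ref{th4.2} to replace the cosine term, so that
\[
A_{k'}(n)\cos\!\big(\tfrac{2\pi}{N}\phi_{k'}(n)\big)-\!\!\sum_{|\xi-\phi'_{k'}(n)|\le\epsilon}\!\!S_gy(n,\xi)
=\!\!\sum_{|\xi-\phi'_{k'}(n)|\le\epsilon}\!\!\big(S_gx(n,\xi)-S_gy(n,\xi)\big).
\]
Then, exactly as in the proofs of Theorems~\ref{th3} and~\ref{th4.2}, I would expand $S_gx(n,\xi)=\sum_l V_gx(n,l)\,\delta(\xi-\omega_x(n,l))$ and likewise for $y$, so that summing $\xi$ over the band collapses the $\delta$'s and turns the right-hand side into
\[
\sum_{\{l:\,|\omega_x(n,l)-\phi'_{k'}(n)|\le\epsilon\}}\!\!\!\!V_gx(n,l)\;-\sum_{\{l:\,|\omega_y(n,l)-\phi'_{k'}(n)|\le\epsilon\}}\!\!\!\!V_gy(n,l).
\]

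The next step is to run the same index-set identification used in the proofs of Theorems~\ref{th3} and~\ref{th4.2}. For $l$ with $|l-\phi'_{k'}(n)|\le d/4$, Theorem~\ref{th4} and the stability theorem just proved keep $\omega_x(n,l)$ within $\tilde\epsilon$, and $\omega_y(n,l)$ within $\tilde\epsilon+NM'\epsilon'/\pi$, of $\phi'_{k'}(n)$, so both $V_gx(n,l)$ and $V_gy(n,l)$ survive in the sums above; for $l$ attached to a different component the separation $d$ (together with $|\mathrm{supp}(\hat g)|<d/2$) pushes both reassigned frequencies out of the band; and for $l$ outside every $Z_{k'}$ the tail estimates \eqref{eqt3}--\eqref{eqt4} and \eqref{eqte3}--\eqref{eqte4} show the corresponding terms are of the size already controlled by $\epsilon$ (hence, after the perturbation, by $\epsilon'$). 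Thus the difference reduces, by linearity of $V_g$, to $-\sum_{l:\,|l-\phi'_{k'}(n)|\le d/4}V_ge(n,l)$.

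Finally I would estimate this residual. Writing $V_ge(n,l)=\sum_{k=0}^{N-1}e(k)\,\overline{g(k-n)}\,e^{-2\pi i l(k-n)/N}$ and carrying out the $l$-summation first, the geometric sum over $l$ recombines with the window exactly as in the clean computation of Theorem~\ref{th4.2} (where $\tfrac{1}{\overline{g}(0)}\sum_l\hat g(l-\phi'_{k'}(n))$ collapsed to a single point-evaluation), so the whole quantity reduces to a point-evaluation of $e$ and is therefore bounded in modulus by $\sum_{k=0}^{N-1}|e(k)|\,|g(k-n)|\le\|e\|_\infty\|g\|_1\le\epsilon'$, which is the asserted inequality.

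The step I expect to be the main obstacle is the middle one: establishing that the synchrosqueezing reassignment carries exactly the frequencies $l$ with $|l-\phi'_{k'}(n)|\le d/4$ into the $\xi$-band around $\phi'_{k'}(n)$ for \emph{both} the clean signal $x$ and the perturbed signal $y$, and controlling the frequencies $l$ outside all the bands $Z_{k'}$. This forces one to use the separation hypothesis $x\in\mathcal B_d$, the support condition $|\mathrm{supp}(\hat g)|<d/2$, and the two tail estimates \eqref{eqte3}--\eqref{eqte4} in concert; once the sum has been pinned down to $\sum V_ge$, the bound $|V_ge(n,l)|\le\|e\|_\infty\|g\|_1$ makes the rest routine.
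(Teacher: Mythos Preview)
Your approach is correct and matches the paper's: both reduce the error to a sum of $V_ge(n,l)$ over the relevant band and then bound it by $\|e\|_\infty\|g\|_1\le\epsilon'$. The paper is marginally more direct---it expands $S_gy$ via \eqref{eqp1}, writes $V_gy=V_gx+V_ge$ by linearity, and invokes Theorem~\ref{th4.2} on the $V_gx$ part---so the index-set matching you flag as the main obstacle is absorbed into the appeal to \eqref{eqp1} rather than argued by comparing two synchrosqueezed transforms.
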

\begin{proof}
From \eqref{eqp1} we have

\begin{align*}
&\vert A_{k'}\left(n\right) \cos\left(\frac{2\pi }{N} \phi_{k'}\left(n\right)\right)-\sum_{\vert\xi-\phi'_{k'}\left(n\right)\vert \leq \epsilon}S_gy\left(n,\xi\right)\vert\\
&\leq \vert A_{k'}\left(n\right) \cos\left(\frac{2\pi }{N} \phi_{k'}\left(n\right)\right)-\sum_{\{l;\vert\omega_x\left(n,l\right)-\phi'_{k'}\left(n\right)\vert \leq \tilde{\epsilon}\}}V_gy\left(n,l\right)\vert\\
&\leq \vert A_{k'}\left(n\right) \cos\left(\frac{2\pi }{N} \phi_{k'}\left(n\right)\right)\\
& -\sum_{\{l;\vert\omega_x\left(n,l\right)-\phi'_{k'}\left(n\right)\vert \leq \tilde{\epsilon}\}}V_gx\left(n,l\right)+V_ge\left(n,l\right)\vert.
\end{align*}

Now by the Theorem \ref{th4.2} we have

\begin{align*}
\vert A_{k'}\left(n\right) \cos&\left(\frac{2\pi 
\phi_{k'}\left(n\right)}{N}\right)\\
& -\sum_{\{l\vert\omega_x\left(n,l\right)-\phi'_{k'}\left(n\right)\vert \leq \tilde{\epsilon}\}}V_gx\left(n,l\right)+V_ge\left(n,l\right)\vert\\
&\leq \vert \sum_{\{l\vert\omega_x\left(n,l\right)-\phi'_{k'}\left(n\right)\vert \leq \tilde{\epsilon}\}}V_ge\left(n,l\right)\vert\\
&\leq \vert \sum_{k=0}^{N-1}e\left(k\right) g\left(k-n\right)e^{\frac{-2\pi ikl}{N}}\vert\\
&\leq \sum_{k=0}^{N-1}\vert e\left(k\right) g\left(k-n\right)e^{\frac{-2\pi ikl}{N}}\vert\\
&\leq \Vert e\Vert_{\infty} \sum_{k=0}^{N-1}\vert g\left(k-n\right)\vert\\
&\leq \epsilon' .
\end{align*}

\end{proof}
\section{\bf{Numerical Results}}

In this section, we apply the algorithms of sections \ref{s:STFTS} and
\ref{s:STFTSreal} for several test
cases. We consider a chirp signal, a multi-component signal and a signal with
interlacing instantaneous frequency elements and these signals with noise. We
compute the instantaneous frequency of these signals using the finite STFT
Synchrosqueezing transform. For the instantaneous-frequency computation, we use
a window function such that its Fourier transform is a Hann function with a 
support of $10$ samples.  

To evaluate the performance of finite STFT Synchrosqueezing transform, we
compare the time varying power spectrum of the finite STFT Synchrosqueezing
transform and the finite STFT transform with the ideal time-varying power
spectrum(itvPS) of the test signals. We define the itvPS of the signal of the form
\eqref{eq1} as: 
\begin{equation}\label{itvPS}
P_x\left(n,\xi\right)=\sum_{k'=1}^K
A_{k'}\left(n\right)^2\delta\left(\xi-\phi'_{k'}\left(n\right)\right).
\end{equation}

%In this section, we compare the finite STFT and the finite STFT Synchrosqueezing transform by comparing
%the spectogram of these two transforms for three kind of signals: the chirp signal, the two component signal and The signal with interlacing frequency elements.
%
% The window function that we consider for these two transform is choosen such that the Fourier transform of the window function is equal to the hanning window with support equal to $15$. 
 
 The first signal is a single-component chirp signal
 \begin{equation}\label{sig1}
 x\left(n\right)=\cos\left(2\pi \left( \frac{n}{20}+0.05\left(\frac{n}{20}\right)^2\right)\right),
 \end{equation}
 for $n=0,\cdots,199$. The phase function of \eqref{sig1} is equal to 
\[
\phi\left(n\right)=10n+\frac{1}{40}n^2,
\]
and the instantaneous frequency is 
\begin{equation}\label{iff0}
{\phi}'\left(n\right)=10+\frac{1}{20}n.
\end{equation}
 The finite STFT and the finite STFT Synchrosqueezing transform of \eqref{sig1} are shown in Figure \ref{fig1}.

\begin{figure}[ht] 
\centering 
\includegraphics[height=0.7\textheight]{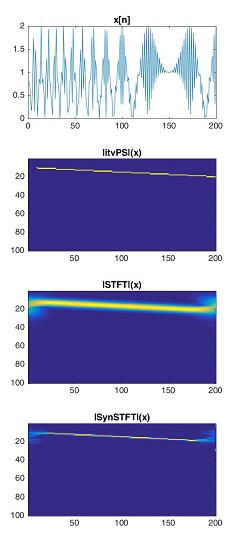} 
\caption{Top: The chirp signal \eqref{sig1} First middle: The itvPS of the signal $x$ Second middle: The finite STFT of the signal$x$. Bottom: The finite STFT Synchrosqueezing transform of the signal $x$. }\label{fig1} 
\end{figure} 
It is seen from Figure \ref{fig1} that the finite STFT Synchrosqueezing 
transform has a better estimation of the instantaneous frequency given in \eqref{iff0}.

The second signal is a two-component signal that is given by
\begin{equation}\label{sig2}
\begin{aligned}
&x\left(n\right)=\cos\left(2\pi \left( \frac{n}{10}+0.2\left(\frac{n}{10}\cos \left(\frac{n}{10}\right)\right)\right)\right)\\
&+\cos\left(2\pi \left( \frac{3n}{10}+0.02\left(\frac{n}{10}\right)^2\right)\right),
\end{aligned}
\end{equation}
for $n=0,\cdots,199$. The phase functions of the signal \eqref{sig2} are equal to
\[
\phi_1\left(n\right)=40n+4\cos\left(\frac{n}{10}\right),
\]
and
\[
\phi_2\left(n\right)=60n+\frac{4}{100}n^2.
\]
Consequently, the instantaneous frequencies are equal to
\begin{equation}\label{iff1}
\phi'_1\left(n\right)=40-\frac{4}{10}\sin\left(\frac{n}{10}\right),
\end{equation}
and
\begin{equation}\label{iff2}
\phi'_2\left(n\right)=60+\frac{8}{100}n.
\end{equation}
The finite STFT and the finite STFT Synchrosqueezing
transform of \eqref{sig2} are shown in Figure \ref{fig2}. Note that the support
of the window function in the frequency domain is $10$ samples which is less 
than the separation of the two consequative instantaneous frequencies, which is
20 samples.  
\begin{figure}[ht] 
\centering 
\includegraphics[ height=0.7\textheight]{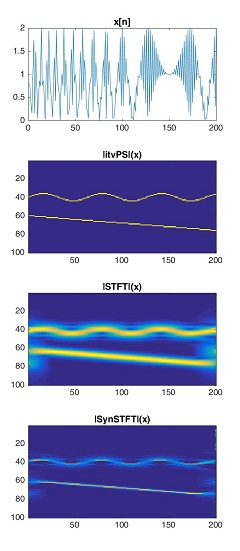} 
\caption{Top: The two component signal \eqref{sig2} . First middle: The itvPS of the signal $x$. Second middle: The finite STFT of the signal$x$. Bottom: The finite STFT Synchrosqueezing transform of the signal $x$.}\label{fig2} 
\end{figure} 

Figure \ref{fig2} shows that the energy in the finite STFT Synchrosqueezing
transform is better concentrated around the instantaneous frequencies
\eqref{iff1} and \eqref{iff2} as compared to the finite STFT. 

The third signal is a signal with interlacing frequency elements
\begin{equation}\label{sig3}
x\left(n\right)=\cos\left( 5\pi \left( \frac{n}{10}\right)\right)+\cos\left(2\pi \left( \frac{n}{10}+0.05\left(\frac{n}{10}\right)^2\right)\right),
\end{equation}
for $n=0,\cdots,199$. The phase functions of the signal \eqref{sig3} are equal to
\[ 
\phi_1\left(n\right)=50n,
\]
and
\[
\phi_2\left(n\right)=20n+\frac{1}{10}n^2,
\] 
which results in the instantaneous frequencies
\[
\phi'_1\left(n\right)=50,
\]
and
\[
\phi'_2\left(n\right)=20+\frac{1}{5}n.
\]
The finite STFT and the finite STFT Synchrosqueezing transform of \eqref{sig3}
are shown in Figure \ref{fig3}. Although the signal in Figure \ref{fig3} is not in $\mathcal{B}_d$, the result is well separated.

\begin{figure}[ht] 
\centering 
\includegraphics[ height=0.7\textheight]{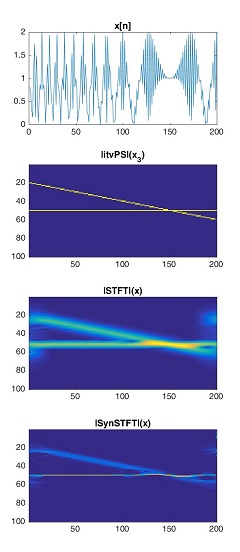} 
\caption{Top: The signal with interlacing frequency elements \eqref{sig3} First middle: The itvPS of the signal $x$. Second middle: The finite STFT of the signal$x$. Bottom: Finite STFT Synchrosqueezing transform of the signal $x$.}\label{fig3} 
\end{figure} 

We also added some noise to these three signals before applying the finite STFT
Synchrosqueezing transform and finite STFT transform. The results are shown in
figures \ref{fig1e}, \ref{fig2e} and \ref{fig3e}. It is seen that the finite
STFT Synchrosqueezing transform is more robust to noise than finite STFT
transform.  
\begin{figure}[ht] 
\centering 
\includegraphics[ height=0.7\textheight]{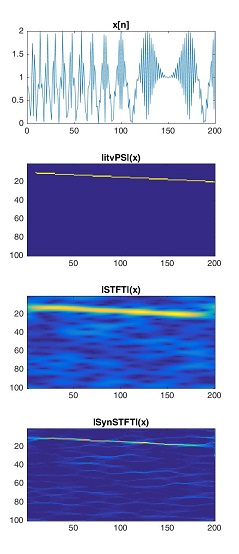} 
\caption{Top: The noisy signal $y=x+e$ where $\Vert e \Vert_{\infty}=0.4$ and $x$ as defined in \eqref{sig1}. First middle: The itvPS of the signal $y$. Second middle: The finite STFT of the noisy signal $y$. Bottom: Finite STFT Synchrosqueezing transform of the noisy signal $y$.}\label{fig1e} 
\end{figure} 
\begin{figure}[ht] 
\centering 
\includegraphics[ height=0.7\textheight]{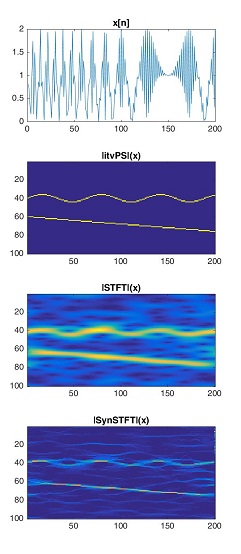} 
\caption{Top: The noisy signal $y=x+e$ where $\Vert e \Vert_{\infty}=0.4$ and $x$ as defined in \eqref{sig2}. First middle: The itvPS of the signal $y$. Second middle: The finite STFT of the noisy signal $y$. Botton: The finite STFT Synchrosqueezing transform of the noisy signal $y$.}\label{fig2e} 
\end{figure} 
\begin{figure}[ht] 
\centering 
\includegraphics[ height=0.7\textheight]{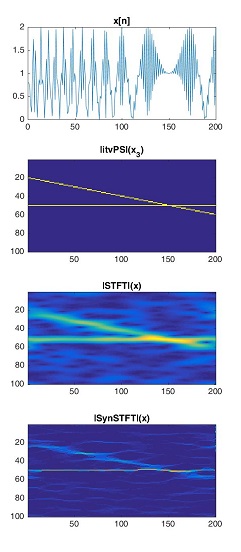} 
\caption{Top: The noisy signal $y=x+e$ where $\Vert e \Vert_{\infty}=0.4$ and $x$ as defined in \eqref{sig3}. First middle: The itvPS of the signal $y$. Second middle: The finite STFT of the noisy signal $y$. Botton: The finite STFT Synchrosqueezing transform of the noisy signal $y$.}\label{fig3e} 
\end{figure}

 \bibliographystyle{IEEEtran}

\end{document}